\def \calT  {\mathcal{T}}
\def \e  {\varepsilon}
\def \eps {\varepsilon}
\def \fhi {\varphi}
\def \vr  {\varrho}
\def \Om {\Omega}
\def \t  {\tau}
\def \del {\partial}
\def \p  {\partial}
\def \R  {\mathbb{R}}
\def \N  {\mathbb{N}}
\def \sgn {{\rm sign}}
\def \si {\smallint}
\def \cut {\mathbbm{1}}
\newtheorem{theorem}{Theorem}[section]
\newtheorem{definition}[theorem]{Definition}
\newtheorem{lemma}[theorem]{Lemma}
\newtheorem{proposition}[theorem]{Proposition}
\newtheorem{assumption}[theorem]{Assumption}
\newtheorem{remark}[theorem]{Remark}
\numberwithin{equation}{section}
\begin{document}

\thispagestyle{empty}
\begin{center}
  ~\vskip7mm {\Large\bf Travelling wave solutions for\\[3mm]
    gravity fingering in porous media flows
  }\\[8mm]

  {\large K. Mitra, A. R\"atz, and B. Schweizer}\\[7mm]

  November 10, 2020\\[4mm]
\end{center}

\pagestyle{myheadings}
\markboth{Travelling wave solutions for gravity
  fingering}{K. Mitra, A. R\"atz, and B. Schweizer}

\begin{center}
   \vskip2mm
   \begin{minipage}[c]{0.8\textwidth}
     {\bf Abstract:} We study an imbibition problem for porous media.
     When a wetted layer is above a dry medium, gravity leads to the
     propagation of the water downwards into the medium. In
     experiments, the occurence of fingers was observed, a phenomenon
     that can be described with models that include hysteresis. In the
     present paper we describe a single finger in a moving frame and
     set up a free boundary problem to describe the shape and the
     motion of one finger that propagates with a constant speed. We
     show the existence of solutions to the travelling wave problem
     and investigate the system numerically.
     \\[-1mm]

    {\bf MSC:} 76S05, 35C07, 47J40\\[-1mm]

    % 35C07   	Traveling wave solutions
    % 76S05   	Flows in porous media; filtration; seepage
    % 47J40   	Equations with nonlinear hysteresis operators
    
    {\bf Keywords:} porous media, travelling waves, hysteresis
   \end{minipage}\\[2mm]
\end{center}

\section{Introduction}

Standard models for flow in unsaturated porous media fail in the
description of a fundamental process, namely the imbibition into a dry
medium with gravity as the driving force. While standard Richards models
predict the formation of uniform imbibition fronts, the experimentally
observed fingers \cite{glass1989mechanism,selker1992fingered} can only
be described with a model that incorporates hysteresis.

Models for incompressible unsaturated porous media flow typically use
the water pressure $p$ and the water saturation $s$ as primary
variables. The Darcy law for the velocity together with the mass
balance equation leads to
\begin{subequations}\label{eq:ParabolicGDE}
  \begin{equation}
    \del_t s = \nabla \cdot (k(s)[\nabla p + g e_z])\,,\label{eq:Richards}
  \end{equation}
  we refer to \cite{Richards_hys, Bear1979, helmig1997multiphase,
    schweizer2017hysteresis} for the modelling.  In the Richards
  equation \eqref {eq:Richards}, the function $k : [0,1] \to \R$ is
  the permeability function which has to be determined from
  experiments, $g$ is the gravitational acceleration, $e_z$ is the
  normal vector pointing upwards.  It is always assumed that $s$ takes
  only values in $[0,1]$.

  Equation \eqref{eq:Richards} must be accompanied by a relation
  between saturation $s$ and pressure $p$.  Models without hysteresis
  demand either the algebraic relation $p = p_c(s)$ for some given
  function $p_c:[0,1] \to \bar\R$, or they include the
  ``$\tau$-correction'' and demand, for some physical parameter
  $\tau>0$, known as the dynamic capillary number, that
  $p=p_c(s)+ \t\del_t s$; this latter model takes inertia in the
  material law into account, see \cite{hassanizadeh1993thermodynamic}.
  If, additionally, hysteresis in an imbibition process shall be
  modelled, a possible simple law is
  \begin{equation}
    \del_t s=\frac{1}{\t}[p-p_c(s)]_+\,,\label{eq:closure}
  \end{equation}
\end{subequations}
where $[\cdot]_+:=\max\{0,\cdot\}$ denotes the positive part. Our aim
is a travelling wave analysis of equation \eqref {eq:ParabolicGDE}.
We recall that $p_c:(0,1)\to \R$ is a given imbibition capillary
pressure function and $\t>0$ is a given constant.

Regarding the modelling we note that, if both imbibition and drainage
should be modelled, one replaces \eqref{eq:closure} by the model of
\cite{BeliaevSchotting},
\begin{align}
  \del_t s = \frac{1}{\t}[p-p_c(s)]_+ + \frac{1}{\t}[p-p_d(s)]_-\,.
  \label{eq:playtype}
\end{align}
Here, $p_d: (0,1)\to \R$ is a drainage capillary pressure function
with $p_d(s) \le p_c(s)$ for all $s\in (0,1)$, and
$[\cdot]_-:=\min\{0,\cdot\}$ is the negative part function.  Equation
\eqref {eq:playtype} is a hysteresis model since, pointwise in space
and time, all pressure values in the closed interval
$[p_d(s), p_c(s)]$ are permitted for a fixed saturation $s$.  The
play-type hysteresis model with dynamic capillary pressure was
analyzed in \cite{BeliaevSchotting, VANDUIJN2018232, mitra2018wetting,
  mitra2019fronts, LamaczRaetzSchweizer2011, SCHWEIZER20125594,
  ratz2014hysteresis, schweizer2012instability, KMitraEUX_2020}.  Since we are
interested in an infiltration problem with $\del_t s \geq 0$, we
restrict ourselves to the case $p_d(s)= -\infty$ as in \cite
{el2018traveling}, i.e., we study \eqref{eq:closure} instead of
\eqref{eq:playtype}.

\begin{figure}[H]
  \centering
  \includegraphics[width=.29\textwidth]{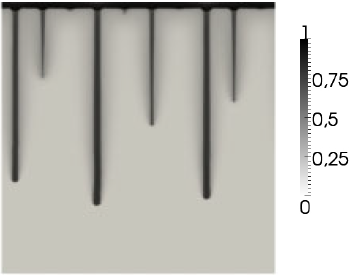}
  \quad
  \includegraphics[width= 0.22 \textwidth]{./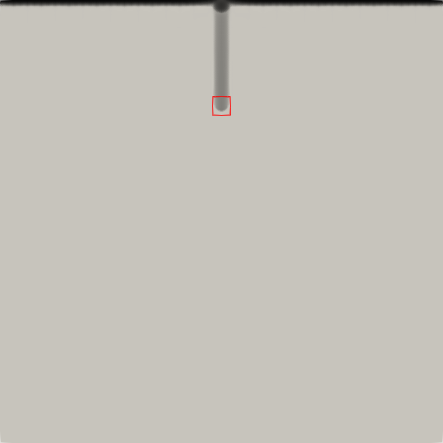}
  \includegraphics[width= 0.07 \textwidth]{./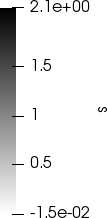}
  \quad
  \includegraphics[width= 0.22 \textwidth]{./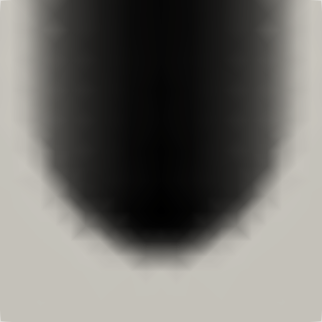}
  \includegraphics[width= 0.07 \textwidth]{./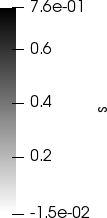}
  \caption{Motivation for this contribution. Left: A snapshot of a
    solution to the time dependent system \eqref
    {eq:ParabolicGDE}. Fingers are clearly visible; the solution is
    comparible to experimental observations
    \cite{ratz2014hysteresis}. Middle: With another choice of boundary
    values and $p_c$, a single finger is generated.  A small squared
    region of size $2 \times 2$ around the finger-tip is marked
    \cite{LamaczRaetzSchweizer2011}. Right: Enlargement of the marked
    region. We see the typical shape of the single finger in
    time-dependent calculations.  The aim of this contribution is to
    analyze the travelling wave equations corresponding to
    \eqref{eq:ParabolicGDE} in order to obtain the shape of the single
    finger without a time-dependent
    calculation.}\label{Fngr1_fig:Ratz}
\end{figure}

Numerical results for the time dependent system \eqref
{eq:ParabolicGDE} are shown in Figure \ref {Fngr1_fig:Ratz},
originally published in
\cite{ratz2014hysteresis,LamaczRaetzSchweizer2011}.  The figure
illustrates a gravity driven imbibition process into an originally dry
medium. Several fingers evolve in the process. It is observed that
each finger travels approximately with constant speed. This has also
been verified experimentally \cite{selker1992fingered}. The present
work aims at the description of a single finger in a co-moving frame
of coordinates.

\paragraph{Travelling wave ansatz, domains and boundary conditions.}
Since we are interested in imbibition fronts in columns of porous
media, we choose a cylindrical spatial domain
$\Om_\infty$. Restricting to two dimensions for convenience and
denoting the width of the cylinder by $L>0$, we consider
$\Om_\infty := (0,L)\times \R \subset \R^2$. Points in $\R^2$ are
denoted as $x = (y,z)$.  We seek time-dependent solutions to
\eqref{eq:ParabolicGDE} that move with a constant speed $c>0$ in
negative $z$-direction, i.e., downwards. This motivates the travelling
wave coordinates
\begin{equation}
  \tilde{z} = z+ ct\,,\quad p(y,z,t) = p(y,\tilde{z})\,, \quad
  s(y,z,t) = s(y,\tilde{z})\,.
\end{equation}
In the following, we omit the tilde symbol and write $z$ instead of
$\tilde{z}$.  The new coordinates transform system
\eqref{eq:ParabolicGDE} into
\begin{subequations}\label{eq:GDETW}
  \begin{align}
    &c\del_z s=\nabla \cdot (k(s)[\nabla p + g e_z])\,,\label{eq:Richards2}\\
    &c\t \del_z s= [p - p_c(s)]_+\,.\label{eq:closure2}
  \end{align}
\end{subequations}

Even though the physical interpretation of a travelling wave solution
requires the study of domains $\Om_\infty$ that extend to
$z\to \pm\infty$, we choose here to study problem \eqref {eq:GDETW} on
the semi-infinite domain
\begin{equation*}
  \Om := (0,L)\times \R_+\quad \text{ with bottom }\quad
  \Sigma:= (0,L)\times \{0\} = \{(y,0):0<y<L\}\,.
\end{equation*}
Truncations of the domain are necessary for numerical calculations and
facilitate the analysis. The problem is translation invariant; one
should consider the bottom $\Sigma = \{z=0\}$ as being far below the
finger.

The boundary data are given by a prescribed saturation $s_0 > 0$ and a
prescribed pressure $p_0$ at the bottom $\Sigma$ of the domain, and by
a prescribed total influx $F_\infty$ on the top of the domain.  More
precisely, we assume that we are given $s_0 : [0,L] \to [0,1]$,
$p_0 : [0,L] \to \R$, and $F_\infty\in \R_+ = (0,\infty)$, and impose
the boundary conditions
\begin{subequations}\label{eq:ParabolicBC}
  \begin{align}\label{eq:ParabolicBC-top}
    &\int_0^L k(s(y,z))[\del_z p(y,z) + g]\, dy \to F_\infty
    \quad \text{ as } z\to +\infty\,,\\
    &s = s_0\qquad \text{ at } z = 0\,,\\
    &p = p_0\qquad \text{ at } z = 0\,.
  \end{align}
\end{subequations}
If the initial saturation of the medium is given by a number
$s_*\in (0,1)$, a natural choice for the boundary data is
$s_0 \equiv s_*$ and $p_0 \equiv p_c(s_*)$.  Along the lateral
boundaries of $\Om$ we impose homogeneous Neumann conditions (no
flux).

\paragraph{Main results.}

We perform an analysis of the travelling wave problem \eqref
{eq:GDETW}--\eqref {eq:ParabolicBC} on $\Om$. For the most part of
this article, we prescribe the relaxation parameter $\tau$, the frame
speed $c$, and the boundary data $s_0$, $p_0$, and $F_\infty$. Only in
our last result, Theorem \ref {thm:c-F-infty}, we choose $c$ in
dependence of the other parameters in order to satisfy a physically
adequate flux condition on the lower boundary.

The first part of our results concerns the system \eqref
{eq:GDETW}--\eqref {eq:ParabolicBC} on the bounded truncated domain
$\Omega^H = (0,L) \times (0,H)$. We choose boundary conditions on the
upper boundary appropriately and show that the system has a
solution. The solution can be found with a variational principle, the
analysis is given in Section \ref {sec.truncated-domain}.

The numerical part of this paper deals with this truncated
problem. One result is the calculation of a finger solution, see
Figure \ref{Fig:introFingers}.  The numerical method and the results
are described in Section \ref {sec.numerics}.

The limit $H\to \infty$ for the solutions on the bounded domain is
studied in Section \ref {sec.unbounded-domain}. We find that every
sequence of solutions $(s_H, p_H)$ to truncated domain problems
possesses a subsequence and a limit $(s,p)$ which is a solution of the
original problem \eqref {eq:GDETW}. The limit process shows an
interesting dichotomy: In one case, the flux boundary condition for
$z\to \infty$ as in \eqref {eq:ParabolicBC} remains satisfied (``large
solution'').  In the other case (``small solution''), only a
corresponding inquality is satisfied.

The two cases are analyzed further. We find that ``large solutions''
are of the type that we would like to see in the fingering process:
they possess a free boundary, the pressure $p$ tends to $-\infty$ as
$z\to \infty$, and the solution is ``large'' in the sense that the
saturation exceed a certain threshold.  In the second case, the
properties are reverted: The solution has a bounded pressure and it is
``small'' in the same sense as the solution was ``large'' in the other
case. Interestingly, both types of solutions are found numerically,
see Section \ref {sec.numerics}.

{\bf Free boundary problem.} Let us emphasize that we treat a free
boundary problem. By \eqref {eq:closure2}, one has to distinguish
between the subdomain $\{x\in\Om\, |\, \del_z s(x) > 0\}$ (expected to
be in the bottom) and the subdomain
$\{x\in\Om\, |\, \del_z s(x) = 0\}$ (expected in the top part). In
physical terms, this means that an imbibition process occurs near and
below the finger-tip, whereas, in the region around the developed
finger, the saturation does not change any more. With reference to the
hysteresis relation, we note that the $z$-independent saturation
implies that the pressure can take arbitrary values (below
$\min p_c(s)$). Therefore, the pressure profile does not have to
reflect the saturation profile and the fingers can remain stable in
their upper part; no blurring by pressure differences occurs.

\begin{figure}[ht]
  \centering
  \includegraphics[width=0.39\textwidth]{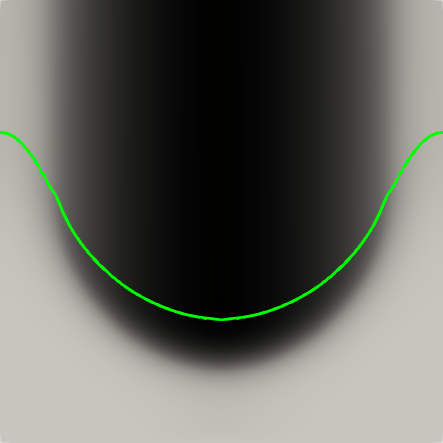}
  \includegraphics[width=0.09\textwidth]{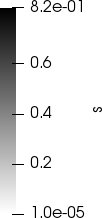}
  \includegraphics[width=0.39\textwidth]{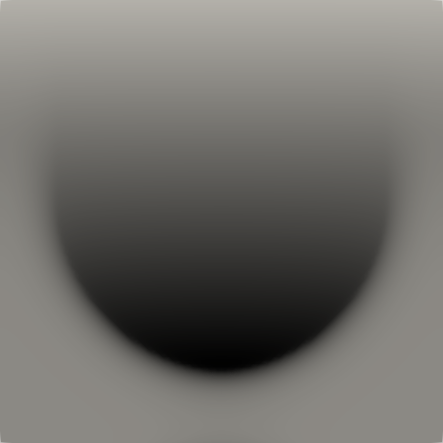}
  \includegraphics[width=0.09\textwidth]{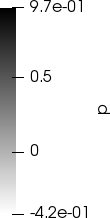}
  \caption{A numerical solution of the free boundary travelling wave
    problem. The gray scale indicates the values of the saturation $s$
    (left) and the pressure $p$ (right).  The level line
    $\Gamma = \{x\,|\, p = p_c(s)\}$ is marked in the left image. The
    line $\Gamma$ shows the free boundary: Below the line, the
    saturation is increasing, above the line, the saturation remains
    constant (increasing in vertical direction and, hence, increasing
    in time when interpreted as a time dependent solution).
  }\label{Fig:introFingers}
\end{figure}

With Theorem \ref {thm:c-F-infty} we provide the result that, for
every $F_\infty$ within appropriate bounds, there exists a wave speed
$c$ such that a physical flux condition at the lower boundary is
satisfied.

\paragraph{Literature.}

The classical porous media equation is obtained by setting $\tau = 0$
and by replacing \eqref {eq:closure} by the algebraic law
$p=p_c(s)$. This classical equation is interesting when the
permeability coefficient is degenerate $k(0) = 0$. For existence and
uniqueness results in this classical case we refer to \cite
{AltLuckhaus, OttoL1}. The hysteresis model \eqref {eq:closure} was
introduced in \cite {hassanizadeh1993thermodynamic,
  BeliaevHassanizadeh, BeliaevSchotting}.  It combines dynamic effects
($\tau>0$) with a play-type hysteresis relation; the latter allows for
an interval of pressure values $p$ for a fixed saturation $s$. For a
review of the modelling, we refer to \cite {schweizer2017hysteresis}.

For the model \eqref {eq:ParabolicGDE}, well-posedness results have
been obtained in one space dimension in \cite {BeliaevSchotting}, and
in higher dimension in \cite {LamaczRaetzSchweizer2011,
  ratz2014hysteresis}. Existence of solutions for an extension of the
play-type model was shown in \cite{KMitraEUX_2020}. In \cite
{schweizer2012instability}, it was shown that the model does not
define an $L^1$-contraction; in this sense, it can explain the
fingering effect. The fingers were found numerically for unsaturated
media in \cite {LamaczRaetzSchweizer2011}, for the two-phase flow in
\cite {KochRaetzSchweizer2013}. Fingers were also observed numerically
in \cite{beljadid2020continuum,cueto2008nonlocal}, where a free-energy
based approach is used for modelling the capillary pressure. For a
result with a degenerate $p_c$-curve, see \cite {SCHWEIZER20125594}.
A uniqueness result was derived in \cite {CaoPop2015}.

Travelling waves for the model have been analyzed in
\cite{VANDUIJN2018232, mitra2018wetting, mitra2019fronts}.  An
analysis for pure imbibition ($\del_t s \geq 0$ allows to set
$p_d(s)= -\infty$) was previously performed for one space dimension in
\cite {el2018traveling}. The present work extends the results to two
space dimensions. Let us note that the methods are independent of the
dimension and that, up to notation, the results remain valid, e.g., in
three space dimensions. The dimension enters only in Sobolev
embeddings that are used for regularity statements in the appendix.

\section{Preliminaries}\label{sec.Preliminaries}

The coefficient functions $k$ and $p_c$ are fixed throughout this
work. We make assumptions that are quite common and consistent with
experiments, see \cite{helmig1997multiphase}.  For an illustration see
\Cref{Fingr1_fig:PcW}.

\begin{assumption} The functions $k:[0,1] \to [0,\infty)$ and
  $p_c : (0,1) \to \R$ satisfy:
  \begin{description}%[label=(A1)]
  \item[(Ass-pc)] The function $p_c$ is differentiable and for some
    $\rho>0$ holds $p_c'\geq \rho$ on $(0,1)$. Upon normalization of
    the pressure, we can set $p_c(s_*) = 0$ for a given saturation
    value $s_*\in \R$. We assume $p_c(s)\to -\infty$ as $s\searrow 0$
    and $p_c(s)\to \infty$ as $s\nearrow 1$.
    \label{FingrAss1}
  \item[(Ass-k)] The function $k$ is differentiable,
    $k|_{(0,1)}\in C^2$, and $k'(.), k''(.) > 0$ on $(0,1)$.
    \label{FingrAss2}
  \end{description}
\end{assumption}

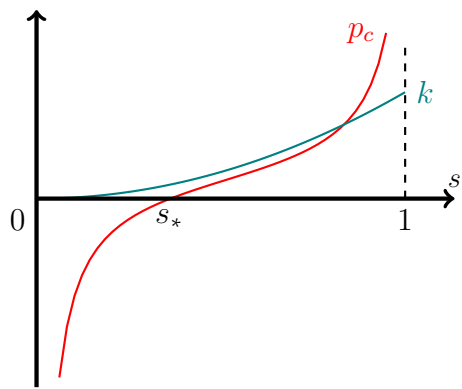
\begin{figure}[h!]
\centering
  \begin{tikzpicture}
[xscale=5,yscale=.5,domain=0:1,samples=100]

\draw[thick, red]
(0.06,-4.7422)--(0.08,-3.3947)--(0.1,-2.5777)--(0.12,-2.0257)--(0.14,-1.6251)--(0.16,-1.319)--(0.18,-1.0757)--(0.2,-0.87638)--(0.22,-0.70879)--(0.24,-0.56489)--(0.26,-0.43906)--(0.28,-0.32727)--(0.3,-0.22654)--(0.32,-0.13462)--(0.34,-0.049755)--(0.36,0.029436)--(0.38,0.10407)--(0.4,0.17508)--(0.42,0.24324)--(0.44,0.30924)--(0.46,0.37367)--(0.48,0.43709)--(0.5,0.5)--(0.52,0.56291)--(0.54,0.62633)--(0.56,0.69076)--(0.58,0.75676)--(0.6,0.82492)--(0.62,0.89593)--(0.64,0.97056)--(0.66,1.0498)--(0.68,1.1346)--(0.7,1.2265)--(0.72,1.3273)--(0.74,1.4391)--(0.76,1.5649)--(0.78,1.7088)--(0.8,1.8764)--(0.82,2.0757)--(0.84,2.319)--(0.86,2.6251)--(0.88,3.0257)--(0.9,3.5777)--(0.92,4.3947) node[left,scale=1] {$p_c$};

\draw[thick, teal, domain=0:.97] plot(\x,{3*\x*\x}) node[right,scale=1] {$k$};

\node[scale=1, below]  at (0.35,0) {$s_*$};

\node[scale=1, below]  at (.97,0) {$1$};
\node[scale=1, below left]  at (0,0) {$0$};
\draw[dashed,thick] (.97,0)--(.97,4);

\draw[->,ultra thick] (0,-5)--(0,5);
\draw[->,ultra thick] (0,0)--(1.1,0) node[scale=.9, above] {$s$};
\end{tikzpicture}
  \caption{Typical functions $p_c$ and $k$.}\label{Fingr1_fig:PcW}
\end{figure}

\paragraph{The free boundary description.}

What qualitative behavior can we expect for solutions of the
travelling wave problem \eqref {eq:GDETW}--\eqref {eq:ParabolicBC}?
We expect that the pressure stabilizes, as $z\to +\infty$, to an
affine function with $\nabla p \approx - g_F e_z$. If $s$ (and hence
$k(s)$) does not depend on $z$, then both sides of \eqref
{eq:Richards2} can vanish. This is what we expect for solutions in the
upper part of the domain. We will be interested in solutions $(p,s)$
that satisfy, for some $h \in \R_+$,
\begin{equation}\label{eq:const-above-h}
  \del_z s=0 \text{ and } p\leq p_c(s) \text{ for all } (y,z) \text{ with }
  y\in (0,L) \text{ and } z> h\,.
\end{equation}
For such a solution we can define a function
$\Psi : [0,L]\to [0,\infty)$ as
\begin{align}
  \Psi(y) := \inf \left\{ z_0>0 \middle|\; \del_z s(y,z)=0 
  \text{ for all } z\geq z_0 \right\}.
\label{eq:PSiDef}
\end{align}
The graph of $\Psi$ is a part of the free-boundary,
$\{ (y,\Psi(y))|\; y\in (0,L)\} \subset \Gamma$.  For the rest of the
paper, we define the function $s^*:[0,L]\to [0,1]$ as
\begin{equation}
  s^*(y):= \lim\limits_{z\to \infty} s(y,z).\label{eq:DefSstar}
\end{equation}
By positivity $\del_z s(y,z) \ge 0$ and boundedness of $s$, the
function $s^*$ is well-defined for solutions $(s,p)$ of \eqref
{eq:GDETW}. When a solution satisfies \eqref {eq:const-above-h}, there
holds $s(y,z) = s^*(y)$ for all $z>h$.

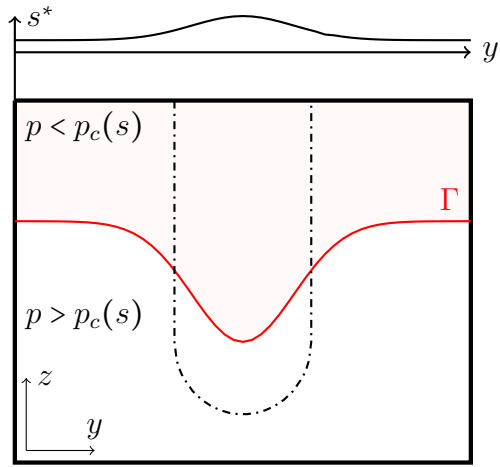
\begin{figure}[h!]
  \centering
  \begin{tikzpicture}
[xscale=3,yscale=3.2,domain=0:1,samples=100]

\draw[thick, fill, pink!10]
(-1,.5)--(-1,-2.27e-05)--(-0.96,-4.9718e-05)--(-0.92,-0.00010546)--(-0.88,-0.00021667)--(-0.84,-0.00043111)--(-0.8,-0.00083078)--(-0.76,-0.0015505)--(-0.72,-0.0028028)--(-0.68,-0.0049067)--(-0.64,-0.0083195)--(-0.6,-0.013662)--(-0.56,-0.021728)--(-0.52,-0.033469)--(-0.48,-0.049929)--(-0.44,-0.07214)--(-0.4,-0.10095)--(-0.36,-0.13681)--(-0.32,-0.17958)--(-0.28,-0.22829)--(-0.24,-0.28107)--(-0.2,-0.33516)--(-0.16,-0.38707)--(-0.12,-0.43294)--(-0.08,-0.469)--(-0.04,-0.49206)--(0,-0.5)--(0.04,-0.49206)--(0.08,-0.469)--(0.12,-0.43294)--(0.16,-0.38707)--(0.2,-0.33516)--(0.24,-0.28107)--(0.28,-0.22829)--(0.32,-0.17958)--(0.36,-0.13681)--(0.4,-0.10095)--(0.44,-0.07214)--(0.48,-0.049929)--(0.52,-0.033469)--(0.56,-0.021728)--(0.6,-0.013662)--(0.64,-0.0083195)--(0.68,-0.0049067)--(0.72,-0.0028028)--(0.76,-0.0015505)--(0.8,-0.00083078)--(0.84,-0.00043111)--(0.88,-0.00021667)--(0.92,-0.00010546)--(0.96,-4.9718e-05)--(1,-2.27e-05)--(1,.5);

\draw[ultra thick]
(-1,.5)--(1,.5)--(1,-1)--(-1,-1)--(-1,.5);

\draw[thick, red]
(-1,-2.27e-05)--(-0.96,-4.9718e-05)--(-0.92,-0.00010546)--(-0.88,-0.00021667)--(-0.84,-0.00043111)--(-0.8,-0.00083078)--(-0.76,-0.0015505)--(-0.72,-0.0028028)--(-0.68,-0.0049067)--(-0.64,-0.0083195)--(-0.6,-0.013662)--(-0.56,-0.021728)--(-0.52,-0.033469)--(-0.48,-0.049929)--(-0.44,-0.07214)--(-0.4,-0.10095)--(-0.36,-0.13681)--(-0.32,-0.17958)--(-0.28,-0.22829)--(-0.24,-0.28107)--(-0.2,-0.33516)--(-0.16,-0.38707)--(-0.12,-0.43294)--(-0.08,-0.469)--(-0.04,-0.49206)--(0,-0.5)--(0.04,-0.49206)--(0.08,-0.469)--(0.12,-0.43294)--(0.16,-0.38707)--(0.2,-0.33516)--(0.24,-0.28107)--(0.28,-0.22829)--(0.32,-0.17958)--(0.36,-0.13681)--(0.4,-0.10095)--(0.44,-0.07214)--(0.48,-0.049929)--(0.52,-0.033469)--(0.56,-0.021728)--(0.6,-0.013662)--(0.64,-0.0083195)--(0.68,-0.0049067)--(0.72,-0.0028028)--(0.76,-0.0015505)--(0.8,-0.00083078)--(0.84,-0.00043111)--(0.88,-0.00021667)--(0.92,-0.00010546)--(0.96,-4.9718e-05)--(1,-2.27e-05)
 node[above left,scale=1] {$\Gamma$};

\draw[thick, dashdotted] (-.3,.5)--(-.3,-.5)--(-0.29,-0.57681)--(-0.28,-0.6077)--(-0.27,-0.63077)--(-0.26,-0.64967)--(-0.22,-0.70396)--(-0.18,-0.74)--(-0.14,-0.76533)--(-0.1,-0.78284)--(-0.06,-0.79394)--(-0.02,-0.79933)--(0.02,-0.79933)--(0.06,-0.79394)--(0.1,-0.78284)--(0.14,-0.76533)--(0.18,-0.74)--(0.22,-0.70396)--(0.26,-0.64967)--(0.27,-0.63077)--(0.28,-0.6077)--(0.29,-0.57681)--(0.3,-0.5)--(0.3,0.5);

\node[scale=1, below right]  at (-1,.5) {$p<p_c(s)$};
\node[scale=1, above right]  at (-1,-.5) {$p>p_c(s)$};

%\draw[->] (0,0)--(0,-.5);

\draw[->] (-.95,-.95)--(-.95,-.65) node[scale=1, right] {$z$};
\draw[->] (-.95,-.95)--(-.65,-.95) node[scale=1, above] {$y$};

%%%%%%%%%%%%%%REMOVE IF S^* PLOT NOT NEEDED%%%%%%%%%%%%%%%%%%
\draw[thick, shift={(0.0,0.1)}] (-1,0.65)--(-0.96,0.65001)--(-0.92,0.65002)--(-0.88,0.65004)--(-0.84,0.65009)--(-0.8,0.65017)--(-0.76,0.65031)--(-0.72,0.65056)--(-0.68,0.65098)--(-0.64,0.65166)--(-0.6,0.65273)--(-0.56,0.65435)--(-0.52,0.65669)--(-0.48,0.65999)--(-0.44,0.66443)--(-0.4,0.67019)--(-0.36,0.67736)--(-0.32,0.68592)--(-0.28,0.69566)--(-0.24,0.70621)--(-0.2,0.71703)--(-0.16,0.72741)--(-0.12,0.73659)--(-0.08,0.7438)--(-0.04,0.74841)--(0,0.75)--(0.04,0.74841)--(0.08,0.7438)--(0.12,0.73659)--(0.16,0.72741)--(0.2,0.71703)--(0.24,0.70621)--(0.28,0.69566)--(0.32,0.68592)--(0.36,0.6736)--(0.4,0.67019)--(0.44,0.66443)--(0.48,0.65999)--(0.52,0.65669)--(0.56,0.65435)--(0.6,0.65273)--(0.64,0.65166)--(0.68,0.65098)--(0.72,0.65056)--(0.76,0.65031)--(0.8,0.65017)--(0.84,0.65009)--(0.88,0.65004)--(0.92,0.65002)--(0.96,0.65001)--(1,0.65);

\draw[->, thick] (-1,.7)--(1,.7) node[right,scale=1] {$y$};
\draw[->,thick] (-1,.5)--(-1,.85) node[right, scale=1] {$s^*$};
%\node[above, scale=1] at (0,.6) {$s^*$}; 
%%%%%%%%%%%%%%%%%%%%%%%%%%%%%%%%%%%%%%%%%%%%%%%%%%%%%%%%%%%%%
\end{tikzpicture}
  \caption{When interpreted as a solution of the time-dependent
    problem, the finger moves with a constant speed downwards. The
    dashed line represents the boundary of the finger; one may think
    of an isoline of the saturation.  The graph at the top part of the
    Figure indicates a profile of the limiting saturation $s^*$ as
    defined in \eqref{eq:DefSstar}.}\label{fig:freeboundary00}
\end{figure}

We refer to Figure \ref {fig:freeboundary00} for an illustration. It
is important not to confuse the free boundary $\Gamma$ with the shape
of the finger (the region of high saturation). We emphasize that the
saturation profile remains unchanged (independent of $z$) above
$\Gamma$; in particular, the finger extends to $z \to +\infty$.

\paragraph{Relations in the travelling wave formulation.}
A fundamental problem in travelling wave analysis is the determination
of free parameters, in our case the wave speed $c$. The other
parameters are fixed: $\tau, g>0$ are physical constants, $L>0$ a
geometrical constant, and the boundary conditions fix $F_\infty>0$ and
$s_*>0$. In the travelling wave formulation, $c\ge 0$ is a further
unknown of the system. Nevertheless, for the most part of our
analysis, we fix boundary values $s_0$ and $p_0$ and treat the problem
with prescribed $c$. Only in our final result we determine $c$ from an
additional boundary condition for $z\to -\infty$.

Let us collect some properties of the real parameters.

\begin{lemma}[Wave speed and limiting pressure in the doubly infinite
  domain]\label{lem:WaveSpeed-Ominfty} Let $(s,p)\in
  C^1(\Om_\infty)\times C^2(\Om_\infty)$ be a classical solution to
  \eqref {eq:GDETW} on $\Om_\infty$ with the boundary condition \eqref
  {eq:ParabolicBC-top} and the two conditions $s\to s_*$ and
  $k(s)\nabla p\to 0$ as $z\to -\infty$. Then, with $s^*$ as in
  \eqref{eq:DefSstar}, the wave speed satisfies
  \begin{equation}
    c = \left(F_\infty - k(s_*)g L\right )\left/ \left
    (\int^{L}_{0}(s^*(y)-s_*)\, dy\right)\,.\right.
    \label{eq:c-infinite-dom}
  \end{equation}
  If the solution possesses a free boundary, i.e.\,\eqref
  {eq:const-above-h} holds for some $h>0$, then
  \begin{equation}
    g_F := g - \left( F_\infty \left/ {\int_0^L k(s^*(y))\, dy} \right.\right)
    \label{eq:gFdef}
  \end{equation}
  satisfies $g_F > 0$ and there holds $\nabla p(y,z) + g_F e_z \to 0$
  as $z\to \infty$ for every $y\in (0,L)$.
\end{lemma}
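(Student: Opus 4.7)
The two assertions split naturally: the formula for $c$ is a global flux balance on $\Om_\infty$, while the statements about $g_F$ and the asymptotic gradient of $p$ are consequences of the linear elliptic equation satisfied by $p$ in the stabilized region $\{z>h\}$.

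For the first, I would integrate \eqref{eq:Richards2} over a cross-section $(0,L)\times(Z_-,Z_+)$. The homogeneous lateral Neumann conditions kill the $\partial_y$-part of the divergence and leave
\begin{equation*}
  c\int_0^L\bigl(s(y,Z_+)-s(y,Z_-)\bigr)\,dy
  =\int_0^L k(s)[\partial_z p+g]\Big|_{z=Z_+}dy
  -\int_0^L k(s)[\partial_z p+g]\Big|_{z=Z_-}dy .
\end{equation*}
Sending $Z_+\to+\infty$ (using \eqref{eq:ParabolicBC-top} and $s(\cdot,Z_+)\to s^*$) and $Z_-\to-\infty$ (using $s\to s_*$ and $k(s)\nabla p\to 0$) converts the right-hand side into $F_\infty-k(s_*)gL$. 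Dividing by $\int_0^L(s^*-s_*)\,dy$---which cannot vanish, since otherwise the numerator would vanish too and $c>0$ would fail---yields \eqref{eq:c-infinite-dom}.

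For the second claim, $\partial_z s\equiv 0$ on $\{z>h\}$ gives $s=s^*(y)$ and reduces \eqref{eq:Richards2} to the linear elliptic equation $\partial_y(K\,\partial_y p)+K\,\partial_z^2 p=0$ with $K(y):=k(s^*(y))$ and Neumann lateral data. Integration in $y$ shows that $z\mapsto\int_0^L K[\partial_z p+g]\,dy$ is $z$-independent and by \eqref{eq:ParabolicBC-top} equal to $F_\infty$, so the $K$-weighted $y$-mean of $\partial_z p+g_F$ vanishes for every $z>h$. To upgrade this averaged identity to the pointwise statement $\nabla p+g_F e_z\to 0$, I would expand $u:=p+g_F z$ in the Neumann eigenbasis of $-K^{-1}\partial_y(K\partial_y\cdot)$ on $(0,L)$: the modes satisfy $u_n''=\lambda_n u_n$, the zeroth ($\lambda_0=0$) is constant because its slope vanishes by the flux identity just derived, and for $n\ge 1$ the exponentially growing branch $e^{+\sqrt{\lambda_n}z}$ is incompatible with the upper bound $p\le p_c(s^*(y))$ from \eqref{eq:const-above-h}, which constrains the growth of $u$ at $+\infty$. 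Hence every non-constant mode decays exponentially, and $\nabla p+g_F e_z\to 0$ follows.

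Finally, the same upper bound $p\le p_c(s^*)<\infty$ combined with $\partial_z p\to -g_F$ immediately excludes $g_F<0$. The \textbf{main obstacle} is the strict inequality $g_F>0$: in the borderline case $g_F=0$, $p$ merely stabilizes to a constant at $+\infty$ and \eqref{eq:const-above-h} offers no direct contradiction. I plan to close this by rewriting $g_F>0$ as $c\int(s^*-s_*)<g\int(k(s^*)-k(s_*))$ via the formula of the first step, and then exploiting the strict monotonicity and convexity of $k$ from (Ass-k) together with $s^*\not\equiv s_*$ to produce the quantitative gap needed for the comparison.
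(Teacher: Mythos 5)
Your treatment of the wave-speed formula is the same as the paper's (integrate \eqref{eq:Richards2} over a slab and let both ends go to infinity), and your Sturm--Liouville expansion of $p+g_Fz$ in the weighted Neumann eigenbasis of $-K^{-1}\partial_y(K\partial_y\cdot)$ is a legitimate, in fact more explicit, substitute for the paper's terse assertion that $\nabla p$ stabilizes ``as a consequence of the strong maximum principle for $\partial_z p$''; the exclusion of the growing modes via the uniform upper bound $p\le p_c(s^*)$ and the zero weighted mean of the nonconstant eigenfunctions is sound. The exclusion of $g_F<0$ also matches the paper.

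The genuine gap is your plan for the strict inequality $g_F>0$, i.e.\ the borderline case $g_F=0$, which you correctly identify as the main obstacle but then propose to close by an argument that cannot work. Rewriting $g_F>0$ via \eqref{eq:c-infinite-dom} gives the equivalent inequality $c\int_0^L(s^*-s_*)<g\int_0^L(k(s^*)-k(s_*))$, i.e.\ $c<g\,\bigl(\int_0^L(k(s^*)-k(s_*))\bigr)/\bigl(\int_0^L(s^*-s_*)\bigr)$. Convexity of $k$ only bounds the right-hand ratio from \emph{below} by $gk'(s_*)$ (and from above by $g(k(1)-k(s_*))/(1-s_*)$); the lemma imposes no upper bound on $c$, and indeed the paper's standing parameter regime \eqref{eq:FingrCF} places $c$ strictly \emph{above} $gk'(s_*)$ and only below the larger difference quotient at $s=1$. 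So monotonicity and convexity of $k$ together with $s^*\not\equiv s_*$ give no quantitative gap in the required direction; the desired inequality is a nontrivial property of the solution, not of the constitutive function. The paper closes this case by a PDE argument at the free boundary: if $g_F=0$ then $p$ is bounded on the set $\{\partial_z s=0\}=\{p\le p_c(s)\}$, its minimum is attained on the lower boundary $\{(y,\Psi(y))\}$ of that set where $p=p_c(s)$, and the strong maximum principle (Hopf) forces $p(y,\Psi(y)+\e)>p_c(s(y,\Psi(y)+\e))$, hence $\partial_z s>0$ just above $\Gamma$, contradicting the definition of $\Psi$. You need an argument of this type (or an equivalent boundary-point lemma applied to your mode expansion) rather than the convexity route.
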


\begin{proof}
  Integrating \eqref{eq:Richards2} over $(0,L)\times (-H,H)$ yields
  \begin{align*}
    \left. c \int_0^L s(y,z)\, dy \right|_{z=-H}^H
    =  \left. \int_0^L k(s(y,z))[\del_z p(y,z) + g]\, dy \right|_{z=-H}^H\,. 
  \end{align*}
  Sending $H\to \infty$ provides \eqref {eq:c-infinite-dom}.

  Relation \eqref {eq:const-above-h} implies that $s(y,z) = s^*(y)$
  holds for $z>h$. Therefore, the elliptic equation reduces to
  \begin{equation}
    \nabla\cdot (k(s^*)\nabla p)=0 \text{ in } (0,L)\times (h,\infty)\,.
      \label{eq:EllipticP}
  \end{equation}
  In particular, the flux quantity
  $\si^L_0 k(s^{*})\,\del_z p(y,z)\, dy$ is independent of $z$ for
  $z>h$.  The boundary condition \eqref {eq:ParabolicBC-top} allows to
  evaluate this flux for $z\to \infty$; we find
  \begin{equation}
    \int_0^L k(s^{*}(y))\del_z p(y,z)\, dy = F_\infty - g\int_0^L
    k(s^{*}(y))\, dy = - g_F \int_0^L k(s^{*}(y))\,dy\,.\label{eq:28346}
  \end{equation}
  This provides that, for $z>h$, the weighted average of $\del_z p$
  coincides with $-g_F$.

  Solutions $p$ of the elliptic equation \eqref {eq:EllipticP} with
  homogeneous Neumann boundary conditions on unbounded domains have
  the property that $\nabla p$ stabilizes to a constant as
  $z\to \infty$ (a consequence of the strong maximum principle for
  $\del_z p$).  Relation \eqref {eq:28346} shows that this constant is
  $-g_F e_z$.
  
  Let us assume for a contradiction $g_F<0$. Then $p$ is a growing
  function for $z\to \infty$. This is in contradiction with \eqref
  {eq:closure2}, in which the left hand side vanishes for $z>h$ and
  $p_c(s)$ is independent of $z$ for $z>h$.

  Let us now assume $g_F = 0$ in order to exclude also this case. We
  use a maximum principle for $p$ in the interior of the set
  $\{ (y,z) | \del_z s = 0 \} = \{ (y,z) | p\le p_c(s) \}$.  The
  minimum of $p$ is attained at the boundary.  At the lower boundary
  of this set, there holds $p = p_c(s)$.  This implies that the
  minimum is attained in a point of the form $(y,z) = (y,\Psi(y))$.
  We now use, for any $\e>0$, the strong maximum principle:
  $p(y,\Psi(y)+\e) > p(y,\Psi(y)) = p_c(s(y,\Psi(y))) =
  p_c(s(y,\Psi(y)+\e))$. This implies $p>p_c(s)$ in $(y,\Psi(y)+\e)$
  and hence $\del_z s(y,\Psi(y)+\e) > 0$, in contradiction to the
  construction of $\Psi$.
\end{proof}

\paragraph{Notation.}

Together with the domain $\Om = (0,L)\times \R_+$ with bottom boundary
$\Sigma = (0,L)\times \{0\}$ we also use, for any $H>0$, the bounded
domain $\Om^H := (0,L)\times (0,H)$ with the top boundary
$\Sigma^H := (0,L)\times \{H\}$. We recall that we always impose
homogeneous Neumann conditions at the lateral boundaries
$\{0\} \times \R_+$ and $\{L\} \times \R_+$ (accordingly for the
truncated domain).

The function $\sgn:\R\to \{0,1\}$ is defined as $\sgn(u):=0$ for
$u\leq 0$, and $\sgn(u):=1$ otherwise. The letter $C$ denotes a
generic positive constant and the value may change from one line to
the next in calculations. We already introduced
$[q]_+ = \max\{0,q\} = (q+|q|)/2$ and $[q]_- = \min\{0,q\} = -[-q]_+$.

\section{Existence result for bounded domains}
\label{sec.truncated-domain}

Let $\tau > 0$, $s_*\in (0,1)$, and two functions
$p_0\in H^{\frac{1}{2}}(\Sigma) \cap C^0(\bar{\Sigma})$ and
$s_0\in H^{1}(\Sigma)$ be given. We assume $s_* \le s_0 \le 1$ and
$p_0 \ge p_c(s_0)$.  For a height parameter $H>0$ we introduce the
following truncated problem.

\begin{definition}[Truncated domain travelling wave problem] Let
  $c, F_\infty > 0$ be given.  A pair
  $(s,p)\in H^1(\Om^H)\times H^2(\Om^H)$ on the domain
  $\Omega^H = (0,L) \times (0,H)$ with upper boundary $\Sigma^H$ and
  lower boundary $\Sigma$ is a truncated domain travelling wave
  solution ($TW_H$-solution) if there holds
  \begin{subequations}\label{eq:TTW}
    \begin{align}
      &c\del_z s = \nabla\cdot (k(s)[\nabla p + g e_z])
      &\text{ in } \Om^H\,,
        \label{eq:TWRichards-TTW}\\
      &c\t \del_z s= [p - p_c(s)]_+ &\text{ in } \Om^H\,,\label{eq:HysDyn-TTW}\\
      &s= s_0\,,\ p = p_0 &\text{ on } \Sigma\,,\label{eq:LowerBoundary-TTW}\\
      &p \equiv p^* \in \R &\text{ on } \Sigma^H\,,\label{eq:UpperBoundar-TTW}\\
      &\int_{\Sigma^H} k(s)[\del_z p + g] = F_\infty\,. &\label{eq:Flux-TTW}
    \end{align}
  \end{subequations}
  We emphasize that the constant pressure value $p^*\in \R$ is a free
  parameter and part of the solution of the problem.
\end{definition}

We note that for every $TW_H$-solution $(s,p)$, the flux quantity
\begin{equation}
  \label{eq:flux-z}
  F_c(z) := \int_0^L k(s(y,z))[\del_z p(y,z) + g] - c s(y,z) \ dy
\end{equation}
is independent of $z\in (0,H)$ by \eqref {eq:TWRichards-TTW}.
Evaluating this flux in the upper and in the lower boundary provides,
by \eqref {eq:Flux-TTW},
\begin{equation}
  \label{eq:flux-bottom}
  \int_\Sigma k(s_0)\del_z p
  +  \int_\Sigma \left( k(s_0) g - c s_0 \right)
  = F_\infty - \int_{\Sigma^H} c s\,.
\end{equation}

\begin{remark}
  Let us give a sloppy description of the consequences of \eqref
  {eq:flux-bottom} for small boundary data $s_0$. There is the
  possibility that $\del_z p$ is large at $\Sigma$. This means that a
  sharp transition occurs near the lower boundary. In the opposite
  case (without boundary layer), the left hand side of \eqref
  {eq:flux-bottom} is small. In this case, a moderate flux
  $F_\infty>0$ forces the system that $s$ is not small at
  $\Sigma^H$. This is the desired behavior for finger-like travelling
  wave solutions; they should connect a small saturation at $z=0$ with
  a moderate or large saturation at $z=H$.
\end{remark}

\begin{remark}[A condition for the wave speed $c$]
  Let us highlight another consequence of the fact that $F_c$ of
  \eqref {eq:flux-z} is independent of $z$.  When $(s,p)$ is a
  solution on the doubly unbounded domain $\Om_\infty$ then we expect,
  in the limit $z\to -\infty$, that $s\to s_*$, $p\to p_c(s_*)$, and
  $\del_z p\to 0$. In this situation, the constant flux quantity is
  necessarily $F_c = (g k(s_*) - c s_*) L$.

  We use this observation in order to choose a closure condition for
  the case when the speed $c$ is treated as an unknown: Even when we
  solve a Dirichlet problem in the truncated domain $\Om$ with
  boundary conditions $s_0$ and $p_0$ at the lower boundary $\Sigma$,
  we will seek for $c$ and solutions to the Dirichlet problem that
  satisfy the additional relation
  \begin{equation}
    F_c = \int_{\Sigma} (k(s_0)[\p_z p + g] - c s_0) = (g k(s_*)-cs_*) L\,. 
    \label{eq:ConditionAtSigma}
  \end{equation}
  \Cref{thm:c-F-infty} yields that, given $s_0$, $p_0$, $s_*$, and
  $F_\infty$, we find a speed $c$ such that \eqref
  {eq:ConditionAtSigma} is satisfied.
\end{remark}

In the remainder of this section, we seek for $TW_H$-solutions
$(s,p)$.  We use the space of functions
\begin{align}
  \label{eq:W1q-s-H}
  H^1_\sharp(\Om^H) := \left\{u\in W^{1,2}(\Om^H) \middle|\;  \mathrm{tr}(u) = 0
  \text{ on } \Sigma\,,\ \exists u^* \in \R:\
  u = u^*  \text{ on } \Sigma^H \right\}\,.
\end{align}
The weak formulation of \eqref {eq:TWRichards-TTW} and \eqref
{eq:Flux-TTW} is:
\begin{equation}
  \int_{\Om^H} c\del_z s\, \phi
  + \int_{\Om^H} k(s)[\nabla p+ g e_z]  \cdot \nabla \phi
  = \int_{\Sigma^H} F_\infty\, \phi
  \quad  \text{ for all } \phi\in H^1_\sharp(\Om^H)\,.\label{eq:WeakSolSplus-TTW}
\end{equation}

\begin{theorem}[Existence of $TW_H$-solutions to prescribed data]
  Let $H,c,\t, F_\infty > 0$ and $s_*\in (0,1)$ be given, let
  $p_0\in H^{\frac{1}{2}}(\Sigma)\cap C^0(\bar{\Sigma})$ and
  $s_0\in H^{1}(\Sigma)$ satisfy
  $$
  s_*\leq s_0<1\,,\quad \text{ and }\quad 0< p_0-p_c(s_0) \text{ on }
  \Sigma\,.
  $$
  Then there exists a $TW_H$-solution $(s, p)$ with
  $s, \del_z s\in L^2(\Om^H)$,
  $p\in H^1(\Om^H)\cap H^2_{\mathrm{loc}}(\Om^H)$.
 \label{theo:Wellposedness-TTW}
\end{theorem}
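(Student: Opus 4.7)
The natural strategy is a Schauder fixed-point argument that decouples the coupled system \eqref{eq:TTW} into two well-understood subproblems: a linear elliptic problem for the pressure $p$ with frozen saturation $s$, and a slice-wise ordinary differential equation for $s$ driven by a frozen pressure $p$. Concretely, for the elliptic step I fix $s$ in a set $K\subset H^1(\Om^H)$ of admissible saturations (specified below) and look for $p=P(s)\in P_0+H^1_\sharp(\Om^H)$, where $P_0$ is a fixed $H^1$-lift of $p_0$ vanishing on $\Sigma^H$, satisfying the weak formulation \eqref{eq:WeakSolSplus-TTW}. The bilinear form $(\tilde p,\phi)\mapsto \int_{\Om^H}k(s)\nabla\tilde p\cdot\nabla\phi$ is coercive on $H^1_\sharp(\Om^H)$ thanks to the lower bound $k(s)\geq k(s_*)>0$ (since $s\geq s_0\geq s_*$) and to a Poincar\'e inequality based on the vanishing trace on $\Sigma$; Lax-Milgram yields a unique $\tilde p$. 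The free parameter $p^*$ and the flux condition \eqref{eq:Flux-TTW} are encoded automatically, since $H^1_\sharp$ contains functions that are constant on $\Sigma^H$. Standard interior elliptic regularity upgrades $p$ to $H^2_{\loc}(\Om^H)$ with $H^1$-bounds depending only on the data.

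For the ODE step, given $p=P(s)$, I construct $\tilde s=Q(p)$ by solving, for a.e.\ fixed $y\in(0,L)$, the initial value problem
\[
c\t\del_z\tilde s(y,z)=[p(y,z)-p_c(\tilde s(y,z))]_+\,,\qquad \tilde s(y,0)=s_0(y)\,.
\]
Fubini together with $\del_z p\in L^2(\Om^H)$ gives $p(y,\cdot)\in H^1(0,H)\hookrightarrow C^0([0,H])$ for a.e.\ $y$, so Picard-Lindel\"of yields a unique non-decreasing solution. Assumption (Ass-pc) provides the crucial upper barrier $\tilde s\leq \bar s<1$: the divergence $p_c(s)\to+\infty$ as $s\nearrow 1$ forces the right-hand side to vanish once $p_c(\tilde s)$ exceeds the sup of $p(y,\cdot)$, and $\bar s$ can be chosen uniformly over $s\in K$ provided $p$ admits a uniform $L^\infty$-control; in two dimensions I would obtain this by an $L^q$-elliptic estimate exploiting the continuity of $p_0$ and the smoothness of $k(\cdot)$ away from $0$.

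To close the loop, take
\[
K=\bigl\{\tilde s\in H^1(\Om^H): s_*\leq \tilde s\leq \bar s,\ \del_z\tilde s\geq 0,\ \|\tilde s\|_{H^1(\Om^H)}\leq M\bigr\}\,,
\]
which is convex and, by Rellich-Kondrachov, compact in $L^2(\Om^H)$. Continuity of $T:=Q\circ P$ in $L^2$ rests on stability of the linear elliptic problem under $L^2$-perturbations of $k(s)$ (using $k\in C^1$ and dominated convergence) and on slice-wise Gronwall stability of the ODE with respect to its driving term. The invariance $T(K)\subset K$ relies on a bound on $\|\tilde s\|_{H^1}$ combining the pointwise estimate $\del_z\tilde s\leq(c\t)^{-1}([p]_++|p_c(\bar s)|)$ with an estimate on $\del_y\tilde s$ obtained by formally differentiating the ODE in $y$ on the active set $\{p>p_c(\tilde s)\}$ and applying Gronwall vertically. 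Schauder's theorem then produces $s\in K$ with $s=T(s)$, and $(s,P(s))$ solves \eqref{eq:TTW} with the regularity claimed.

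The hardest part, in my view, is ensuring that the barrier $\bar s<1$ and the $H^1$-bound $M$ can be chosen \emph{uniformly} along the iteration. The barrier requires better-than-$H^1$ control on $p$ that is not automatic in two dimensions because $H^1\not\hookrightarrow L^\infty$, and one must either invoke elliptic $L^q$-theory based on the $C^0(\bar\Sigma)$-regularity of $p_0$ or a maximum principle tailored to the structure of \eqref{eq:TWRichards-TTW}. The tangential regularity $\del_y\tilde s\in L^2$ is equally delicate because $[\cdot]_+$ is only Lipschitz: differentiation of the ODE in $y$ is legitimate only on the open active set, whereas on its complement $\tilde s$ is frozen, and the matching between the two regimes must be justified carefully (for instance, by first regularising $[\cdot]_+$, carrying out the estimates, and passing to the limit).
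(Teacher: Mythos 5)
Your high-level skeleton (decouple into an elliptic problem for $p$ and a slice-wise ODE for $s$, then apply Schauder) is the same as the paper's, but the decomposition itself differs in a way that matters. You freeze $c\del_z s^{i-1}$ as a right-hand side of a \emph{linear} elliptic problem. The paper instead substitutes the closure relation into the balance law, i.e.\ it replaces $c\del_z s$ by $\tau^{-1}[p-p_c(s^{i-1})]_+$ and solves $\tau^{-1}[p-b]_+=\nabla\cdot(a[\nabla p+ge_z])$ by minimizing the convex, coercive functional $A(p)=\int \tfrac1{2\tau}[p-b]_+^2+\tfrac12 a|\nabla p+ge_z|^2-F_\infty\int_{\Sigma^H}p$. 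This is not cosmetic: in the paper's version the elliptic estimate for $p$ depends only on the data ($s^{i-1}$ enters only through the bounded coefficients $a,b$), whereas in your version it depends on $\|\del_z s^{i-1}\|_{L^2}$, i.e.\ on the constant $M$ defining your set $K$.

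That dependence is where your argument has a genuine gap: the invariance $T(K)\subset K$ is circular. Your barrier $\bar s<1$ and your bound $\del_z\tilde s\le (c\tau)^{-1}[p]_+$ require quantitative control of $p$, which in your scheme is of the form $C_0+C_1M$; you then need the resulting bound on $\|\tilde s\|_{H^1}$ to be $\le M$ again, and there is no reason this closes for the given $c,\tau,H$. Moreover, the uniform $L^\infty$ bound on $p$ that you invoke for the barrier is precisely what the authors state they could \emph{not} derive (it reappears later only as Assumption \eqref{eq:FingrP}). The paper sidesteps all of this: the invariant set is merely $\{s\in L^2:\,s_*\le s\le 1\}$ intersected with an $L^2$-ball, the bound $s\le 1$ follows slice-wise from $p(y,\cdot)\in H^1(0,H)\hookrightarrow C^0$ together with $p_c(s)\to\infty$ as $s\nearrow1$ (no uniform $\bar s<1$ is needed), and compactness is obtained for the \emph{map} rather than the set, via Kolmogorov--Riesz: the uniform $H^1$ bound on $p$ makes its horizontal finite differences uniformly small in $L^2$, ODE stability transfers this to $s$ (using $s_0\in H^1(\Sigma)$), and vertical finite differences are controlled by $\|\del_z s\|_{L^2}$. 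This also removes your second flagged difficulty, the $L^2$ bound on $\del_y\tilde s$ by differentiating the ODE through $[\cdot]_+$, which is never needed. To repair your proof you would essentially have to adopt the paper's substitution in the elliptic step and drop the $H^1$-bounded invariant set in favour of a compactness argument for the iteration map.
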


\begin{proof}
  We use an iteration over saturation fields.

  \smallskip {\em Definition of the iteration.} Let there be given a
  saturation field
  $$s^{i-1}\in Y := \left\{ s\in L^2(\Omega^H) \,|\, % \del_z s\in
    % L^2(\Omega^H)\,, 
    s_* \le s \le 1 \right\}\,.$$ We define the
  coefficient functions $a := k(s^{i-1})$ and $b := p_c(s^{i-1})$ on
  $\Omega^H$. We seek a solution $p$ of
  \begin{equation}
    \frac1{\tau} [p - b]_+ = \nabla\cdot (a\, [\nabla p + g e_z])
    \text{ in } \Om^H\,,\label{eq:TWRichards-TTW-iter}
  \end{equation}
  with the boundary conditions $p = p_0$ on $\Sigma$ and \eqref
  {eq:UpperBoundar-TTW}--\eqref {eq:Flux-TTW}.  This solution can
  be found with a variational method. We define the space of
  admissible functions as
  $X_{p_0} := \left\{ u\in H^1(\Omega^H) \,|\, u = p_0 \text{ on }
    \Sigma\,,\ \exists u^* \in \R:\ u = u^* \text{ on }
    \Sigma^H\right\}$ and minimize the functional
  \begin{equation}
    \label{eq:A-var-principle}
    A : X_{p_0}\to \R\,,\quad A(p) :=
    \int_{\Omega^H} \frac1{2\tau} [p-b]_+^2 +  \frac12 a\, |\nabla p + g e_z|^2
    - F_\infty \int_{\Sigma^H} p\,.
  \end{equation}
  The functional is convex and coercive, which implies that a
  minimizer $p$ exists. The Euler-Lagrange equation for $p$ reads
  \begin{align*}
    \int_{\Omega^H} \frac1{\tau} [p-b]_+\, \fhi + a\, [\nabla p + g e_z]
    \cdot \nabla \fhi
    = F_\infty \int_{\Sigma^H} \fhi \qquad\quad
    \forall \fhi\in H^1_\sharp(\Om^H)\,.
  \end{align*}
  Since arbitrary compactly supported test-functions $\fhi$ can be
  inserted, equation \eqref {eq:TWRichards-TTW-iter} holds for
  $p$. The Euler-Lagrange equation additionally encodes the boundary
  condition $\int_{\Sigma^H} a\, (\del_z p + g) = F_\infty$.  
  Given $p^i = p$, we can solve the family of ordinary differential
  equations
  \begin{equation}\label{eq:ODE-pf1}
    c\t \del_z s= [p^i - p_c(s)]_+ \,,
  \end{equation}
  with initial data $s= s_0$ on $z=0$; this system is related to
  \eqref {eq:HysDyn-TTW} together with the first equation in
  \eqref{eq:LowerBoundary-TTW}. We denote the solution of this system
  by $s =: s^i$.

  \smallskip {\em Fixed point of the iteration.} We claim that, for
  some constant $C = C(H,c,\tau)$ independent of $s^{i-1}$, the
  pressure $p = p^i$ satisfies
  \begin{equation}
    \label{eq:iter-press-bound-L2}
    \| p \|_{L^2(\Omega^H)}^2 + \| \nabla p \|_{L^2(\Omega^H)}^2 \le C\,.
  \end{equation}
  In order to show this estimate, we first choose an $H^1$-extension
  $\hat p_0$ of the data $p_0$, vanishing at the upper boundary. We
  can now multiply equation \eqref {eq:TWRichards-TTW-iter} with
  $p - \hat p_0$ and integrate to obtain
  \begin{align*}
    &\int_{\Omega^H} \frac1{\tau} [p - b]_+ \left( [p-b] - \hat p_0 + b\right)
      + \int_{\Omega^H} (a\, [\nabla p + g e_z])\cdot \nabla (p - \hat p_0)
      = \int_{\Sigma^H}  F_\infty\, p\,.
  \end{align*}
  One of the integrals on the left hand side is an upper bound for
  $k(s_*) \| \nabla p \|_{L^2(\Omega^H)}^2$, the other term with
  quadratic growth in $p$ is on the left hand side and positive
  because of $[p - b]_+ [p-b] \ge 0$. The remaining terms have linear
  growth in $p$ and can therefore be estimated with Youngs inequality
  and with the Poincar\'e inequality.

  The corresponding solutions $s^i = s$ of the ordinary differential
  equation satisfy $0\le s\le 1$ by the growth assumption on $p_c$. In
  particular, there holds $s^i\in Y$. With
  $R := |\Om^H|^{1/2} = |LH|^{1/2}$, we find that the above
  construction provides a map
  $$\calT : Y \supset B_R(0) \to B_R(0) \subset Y\,,
  \qquad s^{i-1} \mapsto s^i\,.$$

  We claim that the map $\calT$ is compact.  We will show the
  compactness below with the characterization of compact subsets of
  $L^2(\Om^H)$ by Kolmogorov-Riesz. An application of Schauder's fixed
  point theorem yields the existence of the desired solution $s$.

  Let us turn to compactness of $\calT$.  We consider the family
  $p = p^i$ of solutions for $s = s^{i-1} \in B_R(0)$. This family of
  solutions is bounded in $H^1(\Om^H)$, hence the finite differences
  $p(y,.) - p(y + \delta, .)\in L^2((0,H);\R)$ are small for
  $\delta >0$ small, independent of $s$. More precisely,
  $$\int_0^{L-\delta} \int_0^H |p(y,z) - p(y + \delta, z)|^2\, dz\, dy
  \le \eta(\delta)\,,$$ with $\eta(\delta) \to 0$ as $\delta\to 0$,
  independent of $s$.  We now consider two solutions of the ordinary
  differential equation \eqref {eq:ODE-pf1}, $s(y,.)$ and
  $s(y + h, .)$ to inputs $p(y,.)$ and $p(y + h, .)$.  The solutions
  differ only as much as their right hand sides and their initial
  values differ. Because of our assumption $s_0\in H^1(\Sigma)$, we
  therefore find also for the solutions
  $$\int_0^{L-\delta} \int_0^H |s(y,z) - s(y + \delta, z)|^2\, dz\, dy
  \le C \eta(\delta)\,.$$ On the other hand, since $\del_z s$ is
  bounded in $L^2(\Om^H)$, the corresponding estimate
  $\int_0^{L} \int_0^{H-\delta} |s(y,z+\delta) - s(y, z)|^2\, dz\, dy
  \le C \eta(\delta)$ is clear. This shows compactness of the image
  set of $s$-fields.  
\end{proof}

\section{Unbounded domain solutions for $H\to \infty$}
\label{sec.unbounded-domain}

In this section we analyze the solutions $(s_H,p_H)$ in the limit
$H\to 0$.  Again, for the larger part of this section, we keep
$\tau>0$, $s_*\in (0,1)$, $F_\infty$, and $c>0$ fixed; only in Theorem
\ref {thm:c-F-infty} we determine $c$ from the other parameters. The
main result of this section is the following: Let $(s_H,p_H)$ denote
the $TW_H$-solution as discussed in \Cref
{theo:Wellposedness-TTW}. Then, for $H\to \infty$, there holds
$(s_H,p_H) \to (s,p)$ in an appropriate sense for some limit pair
$(s,p)$, which is defined on the unbounded domain $\Omega$. The pair
$(s,p)$ is a travelling wave solution for the semi-infinite domain $\Om$.

It turns out that two different limiting solution types are possible.
Type I is the ``large solution''. It is characterized by the following
properties: 1) The solution is large in the sense that
$\int_0^L g k(s(y,z_0))\, dy \ge F_\infty$ for some $z_0$. This means
that a certain $F_\infty$-dependent threshold is exceeded by the
saturation variable. 2) The solution has a free boundary: For some
$h>0$ there holds $\del_z s(y,z) = 0$ for every $z\ge h$. 3) The
solution has an unbounded pressure, $p\to -\infty$ as $z\to \infty$.

Accordingly, Type II solutions are the ``small solutions''. They have
a bounded pressure and no free boundary.

To proceed with the analysis, we consider different assumptions.
% Comments are given below.

\begin{assumption}\label{ass:unbounded-dom}
  The following properties can be considered for the solution sequence
  $(s_H,p_H)$ of \eqref{eq:TTW}, obtained in
  \Cref{theo:Wellposedness-TTW}.
  \begin{description}
  \item[Bounds for parameters] The limiting saturation $s^*\in (0,1)$,
    the wave speed $c$, and the flux $F_\infty$ satisfy
    \begin{subequations}
      \begin{align}
        gk'(s_*) <\, &c < g(k(1)-k(s_*))/(1-s_*)\,,\\
        g L [k(s_*) + k'(s_*)(1-s_*)] <\, & F_\infty < gL k(1)\,.
      \end{align} \label{eq:FingrCF}
    \end{subequations}\vspace*{-8mm}
  \item[Bound for the pressure] For a real number $\bar{p}<\infty$
    independent of $H$ holds
    \begin{equation}
      p_H \leq \bar{p}\quad\text{ in } \Omega^H\,. 
      \label{eq:FingrP}
    \end{equation}
  \item[Local bound for the gradient] There exists $C_P>0$ such that,
    for every $H>0$,
    \begin{equation}
      \|\nabla p_H\|_{L^\infty(\Om^H)}\leq C_P\,. 
      \label{eq:FingrDP}
    \end{equation}
  \item[Regularity] The saturation has the regularity properties
    \begin{equation}
      s_H,\,\del_z s_H\in H^1(\Om^H)\,.
      \label{eq:ass-s-regularity}
    \end{equation}
  \end{description}
\end{assumption}

The assumptions have a quite different character.  Inequalities \eqref
{eq:FingrCF} are ranges for the physical parameters; we expect the
existence of travelling waves in this parameter regime.  The uniform
upper bound of \eqref{eq:FingrP} is expected to hold, but it should be
derived from the system of equations, which we did not succeed to do.
The regularity estimate \eqref {eq:FingrDP} and the local
regularity \eqref {eq:ass-s-regularity} can be shown with the tools of
elliptic regularity theory, see \cite {gilbarg2015elliptic}.  We
formulate them here as assumptions, since the regularity theory is not
the focus of this contribution.

We note that the relations \eqref{eq:FingrP}--\eqref{eq:FingrDP} imply
three further estimates:
\begin{subequations}
\begin{equation}
  \|s_H\|_{L^\infty(\Om^H)} \leq \bar{s} := {p_c}^{-1}(\bar{p}) < 1\,.
  \label{eq:FingrSm}
\end{equation}
In \Cref{lem:NablaSH} we prove that, for a constant
$C_s=C_s(C_P,s_0,p_0)$,
\begin{equation}
  \|\nabla s_H\|_{L^\infty(\Om^H)} \leq C_s\,.
  \label{eq:pzSlowerBOund}
\end{equation}
Since \eqref{eq:pzSlowerBOund} provides
$\|\p_z s_H\|_{L^\infty(\Om^H)}< C_s$, one also has from
\eqref{eq:HysDyn-TTW} that
\begin{equation}
  p_H \leq p_c(s_H) + c\t C_s \quad \text{ in } \Om^H.
  \label{eq:PminPc_BOund}
\end{equation}
\end{subequations}

Our main result on unbounded domains is the following.

\begin{theorem}[Limits of $TW_H$-solutions]
  \label{theo:main}
  Let $c, F_\infty, \t>0$, $s_*\in (0,1)$, and boundary data
  $s_0,\,p_0\in C^1(\Sigma)$ with $s_*\le s_0< 1$ and $p_c(s_0)< p_0$
  be given.  Let all the properties of Assumption \ref
  {ass:unbounded-dom} be satisfied.  For a sequence $H\to \infty$, let
  $(s_H,p_H)$ be solutions to \eqref{eq:TTW}.  Then, for a limiting
  pair $(s,p)$, there holds $(s_H,p_H) \to (s,p)$ locally in
  $L^2(\Omega)$. The limits satisfy $s\in C^0_b(\Om)$,
  $\del_z s\in L^2(\Om)$,
  $p\in H^2_{\mathrm{loc}}(\Om) \cap H^1_{\mathrm{loc}}(\Om\cup
  \Sigma)$, $(s,p)=(s_0,p_0)$ on $\Sigma$, and \eqref{eq:GDETW}.  The
  solution $(s,p)$ is either of Type I or of Type II:
  \begin{description}
  \item[Type I: ``Large solution''] The solution has a free-boundary:
    There exists $h\in \R_+$ such that $\del_z s=0$ for all
    $y\in (0,L)$ and $z\geq h$. The solution is large in the sense that, with
    $s^*(y) := \lim_{z\to \infty} s(y,z)$, there holds
    $g \int_0^L k(s^*(y))\, dy \geq F_\infty$, with strict inequality
    if $p$, $s$, and $\del_z s$ are continuous. Furthermore, $p(y,z)\to -\infty$ as
    $z\to \infty$ in this case.
  \item[Type II: ``Small solution''] The solution has a bounded
    pressure, there holds $p\in L^\infty(\Om)$. Furthermore,
    $\nabla p\in L^2(\Om)$.  The solution is ``small'' in the sense
    that $g \int_0^L k(s^*(y))\, dy \le F_\infty$. 
  \end{description}
  Type I solutions satisfy additionally the boundary condition \eqref
  {eq:ParabolicBC-top}.
\end{theorem}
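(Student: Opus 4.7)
First I would extract the limit by a compactness-and-diagonal argument. Assumption~\ref{ass:unbounded-dom} supplies uniform bounds on $s_H$ and $\nabla s_H$ (via \eqref{eq:FingrSm}--\eqref{eq:pzSlowerBOund}) and on $p_H$ and $\nabla p_H$ (via \eqref{eq:FingrP}--\eqref{eq:FingrDP}), so on every bounded subdomain $\overline{\Om^{H_0}}$ the restrictions $(s_H, p_H)$ are equicontinuous and uniformly bounded. Arzel\`a--Ascoli combined with a diagonal argument in $H_0 \to \infty$ then produces a subsequence $(s_H, p_H) \to (s, p)$ locally uniformly on $\overline{\Om}$ (in particular in $L^2_{\mathrm{loc}}(\Om)$), with $s \in C^0_b(\Om)$ inheriting the Lipschitz bound. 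Weak-$*$ convergence of $\nabla p_H$ in $L^\infty_{\mathrm{loc}}$ together with strong convergence of $k(s_H)$ and $p_c(s_H)$ allows us to pass to the limit in the weak form of \eqref{eq:TWRichards-TTW} and pointwise in \eqref{eq:HysDyn-TTW}. Since $\del_z s \geq 0$ and $s \geq s_0 \geq s_*$, we have $k(s) \geq k(s_*) > 0$, and interior elliptic regularity gives $p \in H^2_{\mathrm{loc}}(\Om)$; the bounds extend up to $\Sigma$, so $(s,p) = (s_0, p_0)$ there.

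Next I would establish the flux identity and a basic inequality. Monotonicity of $s$ in $z$ together with $s \leq \bar s < 1$ ensures the existence of $s^*(y) := \lim_{z \to \infty} s(y,z)$, while $\int_0^\infty \del_z s(y,z)\, dz = s^*(y) - s_0(y)$ together with the $L^\infty$ bound yields $\del_z s \in L^2(\Om)$. Integrating \eqref{eq:Richards2} over $(0,L) \times (z_1, z_2)$ with the lateral no-flux condition shows that
\[
F(z) := \int_0^L \bigl[k(s(y,z))(\del_z p(y,z) + g) - c\, s(y,z)\bigr]\, dy
\]
is independent of $z$; call its value $F$. For the truncated problem the analogue equals $F_\infty - c \int_0^L s_H(y,H)\, dy$ by \eqref{eq:Flux-TTW}, so passing to the limit at any fixed $z$ identifies $\int_0^L s_H(y,H)\, dy \to (F_\infty - F)/c$. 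The monotonicity $s_H(y,H) \geq s_H(y,z)$ with $z$ arbitrarily large then forces
\[
F \leq F_\infty - c \int_0^L s^*(y)\, dy.
\]

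The dichotomy is obtained by splitting on whether $p$ is bounded below on $\Om$. In the Type~II case $p \in L^\infty(\Om)$, I would multiply \eqref{eq:Richards2} by $(p - \hat p_0)\chi_R$, with $\hat p_0 \in H^1(\Om)$ an extension of $p_0$ of compact $z$-support and $\chi_R$ a smooth $z$-cutoff, and integrate by parts. Boundedness of $p$ controls the boundary contributions, and sending $R \to \infty$ yields $\int_\Om k(s)|\nabla p|^2 < \infty$, hence $\nabla p \in L^2(\Om)$. Then $\int_0^L |\del_z p(y,z_n)|^2\, dy \to 0$ along some $z_n \to \infty$; together with $s(y,z_n) \to s^*(y)$ and bounded convergence, the flux identity evaluated at $z_n$ gives $F = g \int_0^L k(s^*)\, dy - c \int_0^L s^*\, dy$, which combined with the inequality above produces the Type~II conclusion $g \int_0^L k(s^*)\, dy \leq F_\infty$. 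In the Type~I case, the uniform Lipschitz bound $\|\nabla p\|_\infty \leq C_P$ upgrades $p(y_0, z_0) \to -\infty$ along some subsequence of points to $p(\cdot, z_n) \to -\infty$ uniformly in $y$; since $p_c(s) \geq p_c(s_*) = 0$ for $s \geq s_*$, equation \eqref{eq:HysDyn-TTW} then forces $\del_z s(\cdot, z_n) \equiv 0$ on these slices. A strong-maximum-principle argument modelled on the proof of Lemma~\ref{lem:WaveSpeed-Ominfty}, applied to $p$ in the region $\{\del_z s = 0\}$ where $\nabla \cdot(k(s)\nabla p) = 0$, propagates this to $\del_z s \equiv 0$ on $\{z \geq h\}$ for a finite $h$; there $s \equiv s^*$ and $\nabla p$ stabilizes to a constant $-g_F e_z$. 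The flux identity and the inequality from step~2 together force $g_F = g - F_\infty/\int_0^L k(s^*)\, dy \geq 0$, yielding $g \int_0^L k(s^*)\, dy \geq F_\infty$, the convergence $p \to -\infty$, and the boundary condition \eqref{eq:ParabolicBC-top}. Strictness under continuity comes from ruling out $g_F = 0$ by the same maximum principle used at the end of Lemma~\ref{lem:WaveSpeed-Ominfty}.

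The main obstacle is the Type~I step: propagating $\del_z s(\cdot, z_n) = 0$ on a single slice to $\del_z s \equiv 0$ above a uniform threshold, against the a priori possibility that the Lipschitz-in-$z$ pressure re-enters the set $\{p > p_c(s)\}$ at heights of order $|p(\cdot, z_n)|/C_P$ above $z_n$. The resolution relies on the elliptic structure of $p$ in $\{\del_z s = 0\}$, the strong maximum principle, and the constancy of $F(z)$, following the template of Lemma~\ref{lem:WaveSpeed-Ominfty}. The remaining steps (energy estimate for Type~II, passage to the limit in the PDE, matching boundary data on $\Sigma$) are essentially routine once the uniform bounds of Assumption~\ref{ass:unbounded-dom} are in hand.
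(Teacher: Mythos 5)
Your overall architecture differs from the paper's in one decisive respect: the paper obtains the dichotomy from the \emph{approximating sequence}, not from the limit. It defines $h(H)$ as in \eqref{eq:DefhH} (the height above which $\del_z s_H \equiv 0$ in $\Om^H$) and splits into ``$h(H)$ bounded'' (Proposition \ref{prop:SolCase1exists}) versus ``$h(H)\to\infty$'' (Proposition \ref{prop:SolCase2exists}). In the first case the free boundary of the limit is inherited for free from $\del_z s_H\equiv 0$ on $\{z\ge h^*\}$, and \eqref{eq:ParabolicBC-top} follows because $F^H(z)=\int_0^L k(s_H)[\del_z p_H+g]\,dy$ is independent of $z$ on $[h^*,H]$ and equals $F_\infty$ at $z=H$, so the \emph{equality} survives the limit at each fixed $z\ge h^*$. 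You instead split on whether the limit pressure is bounded below, and that is where the gap sits.

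Concretely, your Type I branch must prove that ``$p$ unbounded below'' implies $\del_z s\equiv 0$ on a full half-strip $\{z\ge h\}$, and you only assert that a maximum-principle argument ``following the template of Lemma \ref{lem:WaveSpeed-Ominfty}'' does this. That lemma \emph{assumes} the free-boundary structure \eqref{eq:const-above-h}; it cannot create it. What you actually have is $\del_z s\equiv 0$ on isolated slices $z=z_n$, and between consecutive slices the Lipschitz bound $C_P$ permits $p$ to re-enter $\{p>p_c(s)\}$; nothing in the sketch excludes a set $\{\del_z s>0\}$ that is unbounded in $z$ with ever longer excursions, since $\del_z s\in L^1\cap L^\infty$ with $\nabla\del_z s\in L^2$ only forces decay in an averaged sense. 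A second, related gap: your Step 2 yields only $F\le F_\infty-c\int_0^L s^*$, because saturation can be lost at infinity ($\lim_H\int_0^L s_H(y,H)\,dy$ may strictly exceed $\int_0^L s^*$ when the truncated free boundaries escape to infinity); yet your Type I conclusion needs the equality $F_\infty=(g-g_F)\int_0^L k(s^*)$ to obtain \eqref{eq:ParabolicBC-top} and the formula for $g_F$, and the inequality alone does not ``force'' it. The paper gets the equality precisely from the uniform bound $h(H)\le h^*$. Both gaps disappear if you adopt the dichotomy on $h(H)$. Your Type II branch (lower pressure bound, cutoff energy estimate giving $\nabla p\in L^2(\Om)$, flux inequality) is essentially sound and close to Proposition \ref{prop:SolCase2exists}, except that the case hypothesis ``$p$ bounded below'' should itself be derived, as the paper does, from $h(H)\to\infty$ via the maximum principle with test function $[p_H+C_L]_-$.
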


The theorem follows from Propositions \ref {prop:SolCase1exists} and
\ref {prop:SolCase2exists}. Before we can prove these results, we have
to establish an a priori estimate, which is the basis for both
propositions.

\begin{lemma}[A priori estimate for $TW_H$-solutions]
  Let $F_\infty, c, \t, s_* > 0$ and $s_0,p_0\in C^1(\Sigma)$ with
  $s_*\leq s_0(y)<1$ and $p_c(s_0)< p_0$. For a sequence
  $0 < H\to \infty$, let $(s_H,p_H)$ be solutions to
  \eqref{eq:TTW}. We assume that the solution sequence satisfies
  relations \eqref{eq:FingrDP} and \eqref {eq:ass-s-regularity}.  We
  use the characteristic functions
  $\cut_{\!_>} :=\cut_{\{\del_z s_H>0\}}$ and
  $\cut_0:=\cut_{\{\del_z s_H =0\}}$ on $\Om^H$.  There exists a
  constant $C_1 := C_1(c,\t,s_0,p_0,C_P)$, independent of $H$, such
  that
  \begin{subequations}\label{Eq:MainAprioriEstimate}
    \begin{equation}\label{Eq:MainAprioriEstimate1}
      \int_{\Om^H}  \cut_{\!_>}\, {p_c}'(s_H)|\nabla s_H|^2
      + \int_{\Om^H} \cut_0\,  \tfrac{1}{{p_c}'(p_c^{-1}(p_H))}|\nabla p_H|^2
      + c\t\int_{\Sigma^H} |\nabla s_H|^2 \leq C_1\,.
    \end{equation}
    If, additionally, \eqref{eq:FingrP} is satisfied, there exists
    $C_2:=C_2(c,\t,s_0,p_0,C_P,\bar{p})$ such that
    \begin{equation}\label{Eq:MainAprioriEstimate2}
      \int_{\Om^H}  \cut_{\!_>}|\nabla p_H|^2
      + \int_{\Om^H} |\nabla (\del_z s_H)|^2 \leq C_2\,.
    \end{equation}
  \end{subequations}
  \label{lem:AprioriEstimates}
\end{lemma}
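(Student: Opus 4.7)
The strategy is to multiply the strong form \eqref{eq:TWRichards-TTW} of the Richards equation by the auxiliary field
\[
w_H \;:=\; \min\bigl(s_H,\, p_c^{-1}(p_H)\bigr)
\]
and integrate by parts. This field equals $s_H$ on $\{\p_z s_H>0\}$ (where $p_H>p_c(s_H)$) and equals $p_c^{-1}(p_H)$ on $\{\p_z s_H=0\}$ (where $p_H\le p_c(s_H)$). A direct chain-rule computation yields
\[
\nabla w_H\cdot\nabla p_c(w_H)\;=\;p_c'(w_H)\,|\nabla w_H|^2\;=\;\cut_{\!_>}\,p_c'(s_H)|\nabla s_H|^2 + \cut_0\,\tfrac{|\nabla p_H|^2}{p_c'(p_c^{-1}(p_H))},
\]
identifying the first two integrands in \eqref{Eq:MainAprioriEstimate1} with a single weighted Dirichlet seminorm of $w_H$. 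The regularity \eqref{eq:ass-s-regularity} together with \eqref{eq:FingrDP} and \eqref{eq:pzSlowerBOund} guarantees $w_H\in H^1(\Om^H)\cap L^\infty(\Om^H)$.

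\textbf{Proof of \eqref{Eq:MainAprioriEstimate1}.} Multiply \eqref{eq:TWRichards-TTW} by $w_H$ and integrate by parts on $\Om^H$, tracking the boundary contributions on $\Sigma$ and $\Sigma^H$ directly (lateral Neumann faces give zero; all boundary terms are bounded by \eqref{eq:FingrDP} and the assumed $L^\infty$-bounds). The time-derivative term telescopes: since $w_H\p_z s_H=s_H\p_z s_H=\p_z(s_H^2/2)$ on $\cut_{\!_>}$ and vanishes on $\cut_0$, one has $\int c\p_z s_H\,w_H=\tfrac{c}{2}\!\int_0^L[s_H^2]_{z=0}^{z=H}\,dy$. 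The gravity term $g\!\int k(s_H)\p_z w_H$ is handled by one further $z$-integration by parts, yielding $-g\!\int k'(s_H)\p_z s_H\,w_H+\text{bdry}$; using $k'\le K_1$, $|w_H|\le 1$, and the telescoping $\int_{\Om^H}\p_z s_H\le L$, this is bounded by $gK_1L+\text{bdry}$. The central diffusion term decomposes via $\nabla p_H=p_c'(s_H)\nabla s_H+c\t\,\nabla\p_z s_H$ on $\cut_{\!_>}$ into
\[
\int k(s_H)\nabla p_H\cdot\nabla w_H \;=\; \int k(s_H)\,p_c'(w_H)|\nabla w_H|^2 \;+\; c\t\!\!\int_{\cut_{\!_>}}\!\! k(s_H)\,\nabla s_H\cdot\nabla\p_z s_H.
\]
Clairaut's identity gives $\nabla s_H\cdot\nabla\p_z s_H=\p_z(|\nabla s_H|^2/2)$, globally valid because $\nabla\p_z s_H=0$ a.e.\ on $\cut_0$ (Stampacchia, using \eqref{eq:ass-s-regularity}). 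A last $z$-integration by parts converts the cross-term into $\tfrac{c\t}{2}\!\int_{\Sigma^H}\!k(s_H)|\nabla s_H|^2-\tfrac{c\t}{2}\!\int_\Sigma\!k(s_0)|\nabla s_0|^2-\tfrac{c\t}{2}\!\int k'(s_H)\p_z s_H|\nabla s_H|^2$, the last term bounded by $\tfrac{c\t}{2}K_1C_s^2 L$ via \eqref{eq:pzSlowerBOund} and the telescoping identity. Dividing by $k(s_*)>0$ and invoking $p_c'\ge\rho$ produces \eqref{Eq:MainAprioriEstimate1}, with constant depending only on $c,\t,s_0,p_0,C_P$.

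\textbf{Proof of \eqref{Eq:MainAprioriEstimate2}.} Under the extra hypothesis \eqref{eq:FingrP}, \eqref{eq:FingrSm} gives $s_H\le\bar s<1$ uniformly, hence $p_c'(s_H)\le M<\infty$. Differentiating the hysteresis relation \eqref{eq:HysDyn-TTW} (licit via \eqref{eq:ass-s-regularity} and the Sobolev chain rule $\nabla[f]_+=\chi_{\{f>0\}}\nabla f$) gives $c\t\,\nabla\p_z s_H=\cut_{\!_>}\bigl(\nabla p_H-p_c'(s_H)\nabla s_H\bigr)$, equivalently $\cut_{\!_>}\nabla p_H=p_c'(s_H)\cut_{\!_>}\nabla s_H+c\t\,\nabla\p_z s_H$. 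Squaring and integrating produces two mutually coupled inequalities; absorbing the cross terms via Young's inequality and using $(p_c'(s_H))^2|\nabla s_H|^2\le M\cdot p_c'(s_H)|\nabla s_H|^2$ (the latter bounded by $C_1$ from the first estimate) decouples the system and yields \eqref{Eq:MainAprioriEstimate2}.

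\textbf{Main obstacle.} The delicate technical points are threefold: (i) justifying Clairaut's identity and the a.e.\ vanishing $\nabla\p_z s_H=0$ on $\cut_0$ at the Sobolev regularity provided by \eqref{eq:ass-s-regularity}; (ii) controlling all boundary contributions arising from the several integrations by parts --- in particular making use of \eqref{eq:FingrDP} to bound $|\p_n p_H|$ on $\p\Om^H$, and handling the $\Sigma^H$-flux via \eqref{eq:Flux-TTW}; and (iii) achieving $H$-uniformity in the gravity term on $\cut_0$, whose naive estimate fails due to the potentially unbounded measure $|\cut_0|$ as $H\to\infty$, and where the additional $z$-integration by parts described above is essential to reduce matters to the telescoping bound $\int\p_z s_H\le L$.
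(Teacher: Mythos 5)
Your derivation of \eqref{Eq:MainAprioriEstimate1} is sound and is essentially a compressed version of the paper's argument: the paper tests first with $K(s_H)$ (where $K'=1/k$) and then with $\Phi=[K(s_H)-K(p_c^{-1}(p_H))]_+$, and since $K(s_H)-\Phi=K(\min(s_H,p_c^{-1}(p_H)))=K(w_H)$, your single test function $w_H$ merges the two steps and merely drops the $1/k$ reweighting, which is harmless because $k(s_H)\ge k(s_*)>0$. The handling of the cross term $c\t\int\cut_{\!_>}k(s_H)\nabla s_H\cdot\nabla\del_z s_H$ via $\del_z\bigl(\tfrac12|\nabla s_H|^2\bigr)$ and the boundary/gravity terms via \eqref{eq:FingrDP}, \eqref{eq:pzSlowerBOund} matches the paper's Steps 1--2.

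The proof of \eqref{Eq:MainAprioriEstimate2} has a genuine gap. Squaring the differentiated hysteresis relation $\cut_{\!_>}\nabla p_H=p_c'(s_H)\cut_{\!_>}\nabla s_H+c\t\,\nabla\del_z s_H$ is a purely algebraic identity relating the three quantities $X=\int\cut_{\!_>}|\nabla p_H|^2$, $Y=(c\t)^2\int|\nabla\del_z s_H|^2$ and $Z=\int\cut_{\!_>}(p_c'(s_H))^2|\nabla s_H|^2$; only $Z$ is controlled by \eqref{Eq:MainAprioriEstimate1}. The identity reads $X=Y+Z+2\int c\t\,p_c'(s_H)\cut_{\!_>}\nabla s_H\cdot\nabla\del_z s_H$, and Young's inequality on the cross term only yields $|X-Y|\le C(\sqrt{Y}+1)$: the two ``coupled inequalities'' each carry the other unknown on the right-hand side with a constant that cannot be made small, so they do not decouple and neither $X$ nor $Y$ is bounded by this route. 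The missing ingredient is a second, independent piece of information from the PDE itself: one must test \eqref{eq:TWRichards-TTW} with $\del_z s_H=\tfrac{1}{c\t}[p_H-p_c(s_H)]_+$, which (after controlling the boundary terms via \eqref{eq:FingrDP}, \eqref{eq:pzSlowerBOund} and the bulk term $\int(gk'(s_H)-c)|\del_z s_H|^2$ via \eqref{Eq:MainAprioriEstimate1}) shows that $\int_{\Om^H}k(s_H)\nabla p_H\cdot\nabla[p_H-p_c(s_H)]_+$ is bounded. Only then does the polarization identity $2a\cdot(a-b)=|a|^2+|a-b|^2-|b|^2$ with $a=\nabla p_H$, $b=p_c'(s_H)\nabla s_H$ produce $X$ and $Y$ \emph{both with positive sign}, the only negative contribution being $-Z$, which is already controlled. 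This is exactly Step 3 of the paper's proof and cannot be replaced by the algebra-plus-Young argument you describe.
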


\begin{proof} Within this proof, we write $(s,p)$ instead of
  $(s_H,p_H)$ to have shorter formulas.  With $C>0$ we refer to
  generic constants that may depend on $c,\t,s_0,p_0,C_P,\bar{p}$, but
  not on $H$.

  \smallskip {\em Step 1: Test function $K(s)$.} We use
  $K:[0,1]\to [0,\infty)$, defined as
  $K(s) := \int_0^s k(\vr)^{-1}\, d\vr$. Equivalently, we may say that
  $K$ is the primitive of $k^{-1}$, satisfying
  \begin{equation*}
    K'(s)=\frac{1}{k(s)}\,,\quad K(0)=0\,.
  \end{equation*}
  Below, we will use additionally the primitive of $K$; we denote by
  $\tilde{K}$ the function that satisfies $\tilde{K}'(s)=K(s)$ and
  $\tilde{K}(0)=0$.

  We use $K(s)(y,z) = K(s(y,z))$ as a test function in
  \eqref{eq:TWRichards-TTW} and study
  $$c\int_{\Om^H} K(s) \del_z s= \int_{\Om^H} K(s) \nabla\cdot
  (k(s)[\nabla p + g e_z])\,.$$ Using an integration by parts, we
  may write this relation as
  \begin{align}
    &c \int_{\Om^H}\del_z \tilde{K}(s)
      + \int_{\Om^H} k(s)[\nabla p + g e_z]\cdot \nabla K(s)\nonumber\\
    &\qquad
      = \int_{\Sigma^H} K(s) k(s)[\del_z p + g]
      - \int_{\Sigma} K(s_0)  k(s_0)[\del_z p + g]\,. 
  \end{align}
  We have constructed $K$ such that $\nabla K(s) = k(s)^{-1}\nabla
  s$. This gives a simple formula for the second integral. With
  another integration by parts and with
  $\cut = \cut_{\!_>} + \cut_{0}$ we find
    \begin{align}
      &c \int_{\Sigma^H} \tilde{K}(s)- c \int_{\Sigma}\tilde{K}(s_0)
      + \int_{\Om^H}\cut_{\!_>}\, \nabla p \cdot \nabla s
      + \int_{\Om^H} \cut_{0}\, \p_y p\, \p_y s + \int_{\Om^H} g e_z \cdot \nabla s\nonumber\\
      &= \int_{\Sigma^H} K(s)\, k(s)\,\del_z p
      - \int_{\Sigma} K(s_0)  k(s_0)\del_z p +  g\int_{\Sigma^H} K(s)\, k(s)
      - g\int_{\Sigma} K(s_0)  k(s_0)\,. 
      \label{eq:AprioriAllTerms}
    \end{align}
    We note that the last two integrals on the right hand side and the
    first two integrals on the left hand side are bounded.  Since we
    assumed \eqref{eq:FingrDP}, actually the entire right hand side of
    \eqref{eq:AprioriAllTerms} is bounded. The last integral of the
    left hand side can be integrated, which shows that also this term
    is bounded. We therefore find 
    \begin{align}
      &\int_{\Om^H}\cut_{\!_>}\, \nabla p \cdot \nabla s
      + \int_{\Om^H} \cut_{0}\, \p_y p\, \p_y s\le C\,.
      \label{eq:AprioriAllTermB}
    \end{align}
    
    We want to rewrite the first integral. With this aim, we observe
    that $c\t\del_z s= [p-p_c(s)]_+$ in $\Om^H$ implies $c\t \nabla \del_z
    s= (\nabla p- {p_c}'(s)\nabla s)\cut_{\!_>}$ (we recall that we
    assumed $\del_z s\in H^1(\Om)$). This yields
    \begin{equation}
      \int_{\Om^H} \cut_{\!_>} \nabla p \cdot \nabla s
      =c\t\int_{\Om^H} \nabla s\cdot \nabla \del_z s
      + \int_{\Om^H} \cut_{\!_>}\, p_c'(s)|\nabla s|^2\,.\label{eq:Cut1ps}
    \end{equation}
    The first term on the right hand side of \eqref{eq:Cut1ps} is
    \begin{align}
      &c\t\int_{\Om^H} \nabla s\cdot \nabla \del_z s= c\t\int_{\Om^H}
      \del_z\left (\frac{1}{2}|\nabla s|^2\right )=\frac{c\t}{2}
      \int_{\Sigma^H} |\nabla s|^2 - \frac{c\t}{2} \int_{\Sigma}
      |\nabla s|^2 \nonumber\\
      &\qquad =\frac{c\t}{2} \int_{\Sigma^H}
      |\nabla s|^2 - \frac{1}{2c\t} \int_{\Sigma} [p_0-p_c(s_0)]_+^2
      - \frac{c\t}{2} \int_{\Sigma}|\del_y s_0|^2\,.
    \end{align}
    At this point, we obtained from \eqref{eq:AprioriAllTermB} 
    \begin{equation}
      \int_{\Om^H} \cut_{0}\, \p_y s\,\p_y p + \int_{\Om^H}
      \cut_{\!_>}\, p_c'(s)|\nabla s|^2+
      \frac{c\t}{2}\int_{\Sigma^H} |\nabla s|^2 \leq C \,.
      \label{eq:FirstStepApriori}
    \end{equation}
    
    \smallskip
    \textit{Step 2: Test function $\Phi$.}  We next consider the new
    test function
    \begin{equation*}
      \Phi:=[K(s)-K({p_c}^{-1}(p))]_+\in H^1(\Om^H)\,. 
    \end{equation*}
    Note that $\del_z s>0$ $\iff$ $p>p_c(s)$ $\iff$ ${p_c}^{-1}(p)>s$
    $\iff$ $K({p_c}^{-1}(p))>K(s)$. This shows
    $$\Phi=[K(s)-K({p_c}^{-1}(p))]\cut_0\,.$$
    Using $\Phi$ as a test function for \eqref{eq:TWRichards-TTW} and
    exploiting that $\Phi\neq 0$ only when $\del_z s =0$, we find
    \begin{equation}
      \int_{\Om^H} \Phi \nabla\cdot (k(s)[\nabla p + ge_z])
      = c\int_{\Om^H} \Phi \del_z s
      =  0\,.
      \label{eq:TestFucn2_1}
    \end{equation}
    Also on the left hand side, the term $\Phi \nabla\cdot[k(s) ge_z]
    = \Phi k'(s) g \del_z s$ vanishes identically.  Integration by
    parts in \eqref{eq:TestFucn2_1} yields, using $\Phi=0$ on
    $\Sigma$,
    \begin{equation}
      \int_{\Om^H} k(s)\nabla \Phi \cdot \nabla p
      = \int_{\Sigma^H} \Phi k(s) \del_z p\,.\label{eq:TestFucn2}
    \end{equation}
    Because of $\nabla \Phi=\left (\tfrac{1}{k(s)}\nabla s -
    \tfrac{1}{k({p_c}^{-1}(p))}\frac{1}{p_c'({p_c}^{-1}(p))}\nabla p
    \right )\cut_0$, we find
    \begin{equation}
      \int_{\Om^H} \nabla s\cdot \nabla p \,\cut_0
      - \int_{\Om^H}\frac{k(s)}{k({p_c}^{-1}(p))}\
      \frac{|\nabla p|^2}{p_c'({p_c}^{-1}(p))}\, \cut_0
      = \int_{\Sigma^H} \Phi \,k(s) \del_z p\,.
    \end{equation}
    The first integral is
    $\int_{\Om^H} \nabla s\cdot \nabla p \,\cut_0= \int_{\Om^H} \p_y
    s\, \p_y p \,\cut_0$, hence it coincides with the first term in
    \eqref{eq:FirstStepApriori}. Since
    $k(s)\cut_0> k({p_c}^{-1}(p))\cut_0$, from
    \eqref{eq:FirstStepApriori} we arrive at
    \begin{align}
      & \int_{\Om^H}\tfrac{1}{p_c'({p_c}^{-1}(p))}\, |\nabla p|^2\, \cut_0
      + \int_{\Om^H}  \cut_{\!_>}\, {p_c}'(s)|\nabla s|^2
      + \frac{c\t}{2}\int_{\Sigma^H} |\nabla s|^2 \leq C\,,
      \label{eq:SecondStepApriori}
    \end{align}
    where we exploited once more \eqref{eq:FingrDP}.  At this point,
    we have shown \eqref{Eq:MainAprioriEstimate1}.
    
    \smallskip \textit{Step 3: Test function $\del_z s$.} To show
    \eqref{Eq:MainAprioriEstimate2}, we use the test function
    $\del_z s = \frac{1}{c\t}[p-p_c(s)]_+\in H^1(\Om^H)$ in
    \eqref{eq:TWRichards-TTW}. With an integration by parts we obtain
    \begin{equation}
    \begin{split}
      &\frac{1}{c\t}\int_{\Om^H} k(s)\nabla p\cdot \nabla
      [p-p_c(s)]_+\\
      &\qquad =\int_{\Sigma^H} \del_z s\, k(s)\del_z p - \int_{\Sigma}
      \del_z s\, k(s_0)\del_z p + \int_{\Om^H} (gk'(s)-c)|\del_z s|^2\,.
    \end{split}
    \label{eq:Step3-res1}
    \end{equation}
    We observe that, by \eqref{eq:FingrDP} and
    \eqref{eq:pzSlowerBOund}, the first two integrals on the right
    hand side are bounded. Furthermore, the middle term of
    \eqref{eq:SecondStepApriori} shows that also the last integral is
    bounded.

    Using the algebraic manipulation $2a(a-b)=a^2-b^2 + (a-b)^2$, the
    left hand side of \eqref {eq:Step3-res1} is written as
    \begin{align*}
      &\frac{1}{c\t}\int_{\Om^H} k(s)\nabla p\cdot \nabla [p-p_c(s)]_+
      = \frac{1}{c\t}\int_{\Om^H} k(s)\nabla p\cdot (\nabla p-\nabla p_c(s)) \cut_{\!_>}\\
      &\quad = \frac{1}{2c\t}\int_{\Om^H} k(s)[|\nabla p|^2 +|\nabla (p-p_c(s))|^2
        - |\nabla p_c(s)|^2]  \cut_{\!_>}\\
      &\quad = \frac{1}{2c\t}\int_{\Om^H} k(s)[\cut_{\!_>} |\nabla p|^2
        +(c\t)^2 |\nabla (\del_z s)|^2 - \cut_{\!_>}({p_c}'(s))^2 |\nabla s|^2]\,.
    \end{align*}
    Inequality \eqref{eq:SecondStepApriori} along with \eqref
    {eq:FingrSm} shows that the negative term has a bounded integral.
    This shows \eqref {Eq:MainAprioriEstimate2} and concludes the
    proof.
\end{proof}

To investigate the free-boundary structured solution described in
\Cref{theo:main} we define the function $h:\R_+\to \R_+$ with
\eqref{eq:const-above-h} in mind: For $H>0$ and $(s_H,p_H)$ solving
\eqref{eq:TTW}, $h=h(H)$ is defined as
\begin{align}
  h(H) := \inf\{z_0\in [0,H]: \del_z s_H = 0
  \text{ a.e. in } (0,L)\times (z_0,H) \}\,.
  \label{eq:DefhH}
\end{align}
The height $h$ marks a horizontal line such that, above that line,
$\del_z s$ vanishes. We note that $h\in [0,H]$ is well-defined and
that $h = H$ is possible.

\begin{proposition}[Free-boundary solutions]
  \label{prop:SolCase1exists}
  We consider the situation of \Cref{theo:main} with a sequence
  $(s_H,p_H)$ of $TW_H$-solutions for $H\to \infty$. Additionally, we
  assume for the sequence $H\to \infty$ that the height
  \begin{equation}
    h(H)\quad\text{is bounded.} 
  \end{equation}
  Under this assumption, a free-boundary travelling wave solution
  $(s, p)$ exists.  More precisely, there exists a pair $(s,p)$ with
  $s\in C^0_b(\Om)$, $\del_z s\in L^2(\Om)$,
  $p\in H^2_{\mathrm{loc}}(\Om) \cap H^1_{\mathrm{loc}}(\Om\cup
  \Sigma)$, satisfying \eqref{eq:GDETW}--\eqref{eq:ParabolicBC}. The
  solution is of free boundary type in the sense that there exists
  $h^*>0$ such that $\del_z s=0$ for all $y\in (0,L)$ and $z\geq
  h^*$. The flux satisfies
  \begin{equation}\label{eq:F-infty-large-sol}
    F_\infty \le g\int^L_0 k(s^*(y))\, dy\,.
  \end{equation} 
  Under the additional regularity assumptions
  $s, \del_z s, p \in C^0(\Om)$, the strict inequality holds in \eqref
  {eq:F-infty-large-sol}.
\end{proposition}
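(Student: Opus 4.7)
The plan is to extract a subsequential limit from $(s_H,p_H)$, pass to the limit in the equations, and then verify the free-boundary property and the flux inequality by combining the $z$-independence of the flux quantity $F_c$ with the hysteresis closure \eqref{eq:closure2}.

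First I would set $h^* := \sup_H h(H) < \infty$, which is finite by hypothesis. For every $H > h^*$, the definition of $h(H)$ gives $\del_z s_H \equiv 0$ on $(0,L)\times(h^*, H)$, so $s_H(y,z) = s_H(y,h^*)$ there. The uniform bounds in \eqref{eq:FingrSm}--\eqref{eq:pzSlowerBOund} give $s_H$ bounded in $W^{1,\infty}(\Om^H)$, and the upper bound \eqref{eq:FingrP} together with the gradient bound \eqref{eq:FingrDP} and the Dirichlet datum $p_H|_\Sigma = p_0$ yield $p_H$ bounded in $W^{1,\infty}(K)$ on every bounded $K \subset \Om$ (via $p_H(y,z) \geq p_0(y) - C_P z$). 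Coupled with Lemma \ref{lem:AprioriEstimates} (which provides $\del_z s_H \in H^1$ and, by interior elliptic regularity applied to \eqref{eq:TWRichards-TTW}, $p_H \in H^2_{\mathrm{loc}}$), I extract a subsequence along which $s_H \to s$ uniformly on compacta, $\del_z s_H \rightharpoonup \del_z s$ weakly in $H^1_{\mathrm{loc}}$, $p_H \to p$ strongly in $H^1_{\mathrm{loc}}$, and $p_H \rightharpoonup p$ weakly in $H^2_{\mathrm{loc}}(\Om)$. These modes of convergence suffice to pass to the limit in the weak formulation \eqref{eq:WeakSolSplus-TTW} (giving \eqref{eq:Richards2}), in the pointwise identity \eqref{eq:closure2}, and in the Dirichlet traces on $\Sigma$.

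Since $\del_z s_H \equiv 0$ on $(0,L)\times(h^*, H)$ for every large $H$, the limit inherits $\del_z s \equiv 0$ on $(0,L)\times(h^*,\infty)$, hence $s(y,z) = s^*(y)$ there. Using the $z$-independence of $F_c(z) = \int_0^L k(s_H)[\del_z p_H + g] - c\, s_H\, dy$ (a direct consequence of \eqref{eq:TWRichards-TTW}) and matching its value at $z = H$ (where \eqref{eq:Flux-TTW} applies) with its value at any $z \in (h^*, H)$, where $s_H(\cdot,z) = s_H(\cdot,h^*)$, I obtain
\begin{equation*}
  \int_0^L k(s_H(y,h^*))\,[\del_z p_H(y,z) + g]\, dy = F_\infty
  \qquad \forall z \in (h^*, H)\,.
\end{equation*}
Passing to the limit gives the same identity for $(s^*,p)$ and all $z > h^*$, which in particular verifies \eqref{eq:ParabolicBC-top}. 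Rearranging,
\begin{equation*}
  \int_0^L k(s^*(y))\, \del_z p(y,z)\, dy = F_\infty - g\int_0^L k(s^*(y))\, dy\,,
\end{equation*}
a constant in $z$. If this constant were strictly positive, the weighted integral $z \mapsto \int_0^L k(s^*(y))\, p(y,z)\, dy$ would grow linearly and unboundedly in $z$, contradicting the pointwise bound $p(y,z) \leq p_c(s^*(y))$ on $\{z > h^*\}$ (which follows from \eqref{eq:closure2} since $\del_z s = 0$ forces $[p - p_c(s)]_+ = 0$). This yields $F_\infty \leq g\int_0^L k(s^*(y))\, dy$.

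For the strict inequality under the additional continuity of $s$, $\del_z s$, $p$, I would recycle the maximum-principle argument at the end of Lemma \ref{lem:WaveSpeed-Ominfty}: in the equality case $g_F = 0$, $p$ satisfies the weighted harmonic equation $\nabla \cdot (k(s^*)\nabla p) = 0$ above $h^*$ and stabilizes to a constant; its minimum on $\{\del_z s = 0\}$ is attained on the lower boundary $\{z = \Psi(y)\}$ where $p = p_c(s)$, and the strong maximum principle forces $p > p_c(s)$ strictly just above, producing $\del_z s > 0$ there and contradicting the definition of $\Psi$. The main technical hurdle I anticipate is propagating enough regularity up to the free boundary to make this maximum-principle step rigorous, and ensuring that the trace convergence at $\Sigma$ is strong enough to recover $(s_0,p_0)$ in the classical sense needed for the argument.
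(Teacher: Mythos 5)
Your proposal is correct and follows the same overall architecture as the paper's proof (subsequential limit, $z$-independence of the flux $F_c$, exclusion of $g_F<0$, and the strong-maximum-principle argument from Lemma~\ref{lem:WaveSpeed-Ominfty} for strictness), but it differs in one substantive way: you omit the paper's Step~1, in which the shifted pressure $\tilde p_H = p_H + g_{\!_{F,H}}z$ is used as a test function in the divergence-form equation \eqref{eq:RicharsDivForm} to obtain the \emph{global} bound $\int_{\Om^H}|\nabla\tilde p_H|^2\le C(1+h^*)$ uniformly in $H$. You instead obtain compactness directly from the assumed pointwise bounds \eqref{eq:FingrP}--\eqref{eq:FingrDP} together with the Dirichlet data on $\Sigma$, which indeed yields local $W^{1,\infty}$ control of $p_H$ and suffices for all the conclusions stated in the proposition; your replacement of the paper's appeal to stabilization of $\nabla p$ by the observation that a positive constant value of $\int_0^L k(s^*)\del_z p$ would force $\int_0^L k(s^*)\,p$ to grow linearly, contradicting $p\le p_c(s^*)$ above $h^*$, is if anything cleaner. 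What your route gives up is the global information $\nabla p + g_F e_z\in L^2(\Om)$ encoded in \eqref{eq:PtildeSbound}, i.e.\ the quantitative statement that the pressure gradient converges to $-g_F e_z$ in an integrated sense as $z\to\infty$; this is not needed for \eqref{eq:F-infty-large-sol} but is part of the qualitative picture of Type~I solutions ($p\to-\infty$ linearly) that the paper's estimate delivers for free. The technical caveats you flag at the end (regularity up to the free boundary for the maximum-principle step) are left at the same level of informality in the paper itself, which simply invokes the additional continuity assumptions.
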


\begin{proof}
  Let $h^*>0$ denote an upper bound of the function $h(H)$, i.e.
  \begin{equation}
    h(H)\leq h^* \text{ for all } H.
  \end{equation}

  \smallskip {\em Step 1: An additional a priori estimate.} 
  We consider once more the function
  \begin{equation}
    s^*_H(y)=s_H(y,h^*) \text{ for all } y\in (0,L)\,.
  \end{equation}
  Let $g_{\!_{F,H}}\in \R$ be the number
  \begin{equation}\label{eq:gFH-def}
    g_{\!_{F,H}} := g - \left(F_\infty \middle/ \int_0^L k(s^*_H(y))\, dy\right)\,,
  \end{equation}
  and let $\tilde{p}_H\in H^1(\Om^H)$ be the function
  \begin{equation}
    \tilde{p}_H(y,z) := p_H(y,z) + g_{\!_{F,H}} z \text{ for } (y,z)\in \Om^H\,.
  \end{equation}
  We note that these definitions reflect the observations of
  \Cref{lem:WaveSpeed-Ominfty}. We finally define
  $\fhi_H\in C^2([0,1])$ as the function
  \begin{equation}
    \fhi_H(s) := c s - (g-g_{\!_{F,H}}) k(s)\,.
  \end{equation}
  This allows to write \eqref{eq:TWRichards-TTW} in the form
  \begin{equation}
    \nabla\cdot[k(s_H)\nabla \tilde{p}_H] = \del_z
    \fhi_H(s_H)\,.\label{eq:RicharsDivForm}
  \end{equation}
  We observe that, by \eqref {eq:Flux-TTW} and the choice of
  $g_{\!_{F,H}}$ in \eqref{eq:gFH-def},
  \begin{equation}
    \label{eq:pf-fb-824}
    \int_{\Sigma^H}\, k(s_H)\del_z \tilde{p}_H
    = F_\infty + (g_{\!_{F,H}} - g)  \int_{\Sigma^H}\, k(s_H)
    =0\,.
  \end{equation}
  The test function $\tilde{p}_H$ in \eqref{eq:RicharsDivForm}
  provides the identity
  \begin{equation}\label{eq:pf-fb-235}
    \int_{\Om^H} \tilde{p}_H\, \nabla\cdot[k(s_H)\nabla
    \tilde{p}_H]=\int_{\Om^H}\tilde{p}_H\, \del_z \fhi_H(s_H)\,.
  \end{equation}
    
  The left hand side of \eqref {eq:pf-fb-235} is calculated with an
  integration by parts, exploiting the fact that $p_H$ on the upper
  boundary is constant, $p_H \equiv p_H^*$ on $\Sigma_H$. In the last
  line of the calculation we use \eqref {eq:pf-fb-824}.
  \begin{align*}
    &\int_{\Om^H} \tilde{p}_H \nabla\cdot[k(s_H)\nabla \tilde{p}_H]\\
    &\quad =
     - \int_{\Om^H} k(s_H)|\nabla \tilde{p}_H|^2
     + \int_{\Sigma^H} \tilde{p}_H k(s_H)\del_z \tilde{p}_H
     -\int_{\Sigma} \tilde{p}_H k(s_H)\del_z \tilde{p}_H\\
    &\quad = - \int_{\Om^H} k(s_H)|\nabla \tilde{p}_H|^2
     + (p^*_H + g_{\!_{F,H}} H)\int_{\Sigma^H} k(s_H)\del_z \tilde{p}_H
     - \int_{\Sigma} p_0\, k(s_0) [\del_z p_H + g_{\!_{F,H}}] \\
    &\quad = - \int_{\Om^H}
    k(s_H)|\nabla \tilde{p}_H|^2 -
    \int_{\Sigma} p_0\, k(s_0)[\del_z p_H + g_{\!_{F,H}}] \,.
  \end{align*}
  The right hand side of \eqref {eq:pf-fb-235} is treated with two
  integrations by parts, 
  \begin{align*}
    &\int_{\Om^H}\tilde{p}_H\, \del_z \fhi_H(s_H)\\
    &\quad =- \int_{\Om^H}\fhi_H(s_H) \del_z\tilde{p}_H 
    + \int_{\Sigma^H}\tilde{p}_H\, \fhi_H(s^*_H)-
    \int_{\Sigma}\tilde{p}_H\, \fhi_H(s_H) \\
    &\quad = - \int_{\Om^H}\fhi_H(s_H) \del_z \tilde{p}_H
    + \left[ \int_{\Om^H}\del_z \tilde{p}_H\,
      \fhi_H(s^*_H) + \int_{0}^L p_0\, \fhi_H(s^*_H) \right] -
    \int_{\Sigma} p_0 \, \fhi_H(s_0)\\
    &\quad = \int_{\Om^H}(\fhi_H(s^*_H) - \fhi_H(s_H))\del_z
    \tilde{p}_H+ \int_{0}^L(\fhi_H(s^*_H) - \fhi_H(s_0))p_0\,.
  \end{align*}
  Boundedness of many of the above terms can be concluded from the
  facts that $g_{\!_{F,H}}$ is bounded, $\fhi_H\in C^1([0,1])$, and
  boundedness of $\del_z p$ from \eqref {eq:FingrDP}.  From \eqref
  {eq:pf-fb-235} and Young's inequality we obtain
  \begin{align*}
    &\int_{\Om^H} k(s_H)|\nabla \tilde{p}_H|^2
    \leq C - \int_{\Om^H}(\fhi_H(s^*_H) - \fhi_H(s_H))\del_z \tilde{p}_H\\
    &\quad \leq C + \int_{\Om^H}\frac{1}{2k(s_H)}|\fhi_H(s^*_H) - \fhi_H(s_H)|^2
    + \int_{\Om^H} \frac{k(s_H)}{2}|\del_z \tilde{p}_H|^2\,.
  \end{align*}
  We have applied Young's inequality in such a way that the last term
  on the right hand side can be substracted from both sides.  Since
  $\fhi_H(s^*_H) - \fhi_H(s_H)=0$ holds for $z\geq h^*$, the first
  integral on the right hand side is bounded.  We conclude
  \begin{align*}
    &\int_{\Om^H} k(s_H) |\nabla \tilde{p}_H|^2 \leq C (1 + h^*)\,.
  \end{align*}
  Recalling additionally the estimates from \Cref
  {lem:AprioriEstimates}, we have the following estimates for the
  solution sequence:
  \begin{align}
    \int_{\Om^H}[ |\nabla \tilde{p}_H|^2
    + |\del_z s_H|^2 + |\nabla \del_z s_H|^2]
    \le C\,.\label{eq:PtildeSbound}
  \end{align}

  \smallskip {\em Step 2: Limit equations.}  It remains to exploit the
  bounds of \eqref {eq:PtildeSbound} to construct the limit solution
  for $H\to \infty$.  Since the sequence $g_{\!_{F,H}}$ is bounded, we
  can choose a subsequence $\{H_i\}_{i\in \N}$ with $\lim H_i=\infty$
  and $g_F\in \R$ such that $g_{\!_{F,H_i}}\to g_{F}$. In the
  following, we only use this subsequence. The estimate
  \eqref{eq:PtildeSbound} allows to choose a further subsequence and a
  pair $(s,p)$ with $s\in H^1_{\mathrm{loc}}(\Om) \cap L^\infty(\Om)$
  and $p\in H^1_{\mathrm{loc}}(\Om)$ such that, for any bounded
  compact subset $\Om'\subset \Om$, there holds
  \begin{subequations}\label{eq:Convergence_sHpH}
  \begin{align}
    &s_{H} \to s\text{ and }
    \del_z s_{H} \to \del_z s \text{ strongly in } L^2(\Om')\,,\\
    % &\text{ strongly in } L^2(\Om')\,,\\
    &p_{H} \rightharpoonup p \text{ weakly in } H^1(\Om')\text{ and }
    p_{H} \to p \text{ strongly in } L^2(\Om')\,.
  \end{align}
  \end{subequations}
  These convergences imply that also the limit $(s,p)$ satisfies
  \eqref{eq:GDETW} in $\Om$ and the boundary conditions at the lower
  boundary. Furthermore, $\del_z s_H \equiv 0$ for all $H$ on
  $\{ z\ge h^*\}$ implies $\del_z s \equiv 0$ on $\{ z\ge h^*\}$.
  
  \smallskip {\em Step 3: Flux relations.}  Regarding the flux we use
  that the quantity
  \begin{equation}
    \label{eq:flux-H-z}
    F_c^H(z) := \int_0^L k(s_H(y,z)) [\del_z p_H(y,z) + g] - c s_H(y,z)\, dy
  \end{equation}
  is independent of $z\ge 0$ (compare $F_c$ in \eqref
  {eq:flux-z}). Since the saturation $s_H$ is independent of $z$ for
  $z\ge h^*$, also the quantity
  \begin{equation}
    \label{eq:flux-F-z}
    F^H(z) := \int_0^L k(s_H(y,z)) [\del_z p_H(y,z) + g]\, dy
  \end{equation}
  is independent of $z$ for $z\ge h^*$.  Because of this independence
  and because of $F^H(H) = F_\infty$, we find, as $H\to \infty$, for
  every $z\ge h^*$,
  \begin{equation}
    \label{eq:flux-F-z-lim}
    F_\infty = F^H(z) \to \int_0^L k(s(y,z)) [\del_z p(y,z) + g]\, dy\,.
  \end{equation}
  This shows that the boundary condition \eqref {eq:ParabolicBC-top}
  is satisfied by the limit functions.

  \smallskip We have found a free boundary solution on an unbounded
  domain.  As in Lemma \ref {lem:WaveSpeed-Ominfty}, there follows
  $g_F \ge 0$ and, under the regularity assumptions
  $s, \del_z s, p \in C^0(\Om)$, the strict inequality $g_F > 0$. This
  implies
  $F_\infty = (g-g_F)\int^L_0 k(s^*(y))dy \le g\int^L_0 k(s^*(y))\,
  dy$, and hence \eqref {eq:F-infty-large-sol}.
\end{proof}

\begin{proposition}[Bounded pressure solutions]\label{prop:SolCase2exists}
  Let the situation be that of \Cref{theo:main}, with $TW_H$-solutions
  $(s_H,p_H)$ along a sequence $H\to \infty$. We assume here that
  the sequence of heights  $h(H)$ diverges,
  \begin{equation}
    h(H)\to \infty\quad\text{as}\quad H\to \infty\,.
  \end{equation}
  Then, a bounded pressure travelling wave solution $(s, p)$ exists.
  More precisely, there exists a pair $(s,p)$ with $s\in C^0_b(\Om)$,
  $\del_z s\in L^2(\Om)$,
  $p\in H^2_{\mathrm{loc}}(\Om) \cap H^1_{\mathrm{loc}}(\Om\cup
  \Sigma)$ satisfying \eqref{eq:GDETW}.  For $C>0$ there holds
  $$ \|p\|_{L^\infty(\Om)} + \|\nabla p\|_{L^2(\Om)} + \|\del_z
  s\|_{H^1(\Om)} \leq C\,.$$ The solution satisfies
  \begin{equation}
    g \int^L_0 k(s^*(y))\, dy \le F_\infty\,.
  \end{equation}
  % and stabilizes at infinity: For some $p^*\in \R$ there holds
  % $p(y,z)\to p^*$ as $z\to \infty$ for every $y\in (0,L)$.
\end{proposition}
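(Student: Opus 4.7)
My approach would parallel the proof of \Cref{prop:SolCase1exists}, but working directly with $p_H$ and avoiding the shift $\tilde p_H = p_H + g_{\!_{F,H}} z$: in the ``small solution'' regime we expect $p$ itself to remain bounded. I would first invoke \Cref{lem:AprioriEstimates} for the uniform bounds
$\int_{\Om^H}\cut_{\!_>}|\nabla p_H|^2 + |\nabla\del_z s_H|^2 + \cut_{\!_>}\, p_c'(s_H)|\nabla s_H|^2 \le C$.
Combined with $p_H\le \bar{p}$, $s_H\in[s_*,\bar{s}]$, \eqref{eq:pzSlowerBOund}, and the $L^2$-bound $\|\del_z s_H\|_{L^2(\Om^H)}\le C$ (which follows from $\del_z s_H\le \bar{p}/(c\t)$ together with $\int_{\Om^H}\del_z s_H\le L$), these yield uniform local compactness of $(s_H,p_H)$ in $L^2_{\mathrm{loc}}(\Om)$.

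The key --- and in my view hardest --- new step is a uniform lower bound on the free parameter $p_H^* := p_H|_{\Sigma^H}$; this is where the hypothesis $h(H)\to\infty$ must be used. My plan is a minimum-principle argument on the open set $U_H := \{p_H<0\}\subset \Om^H$: the normalization $p_c(s_*)=0$ combined with $s_H\ge s_*$ gives $p_c(s_H)\ge 0$ on $U_H$, so $\del_z s_H=0$ there, and the equation reduces to $\nabla\cdot(k(s_H)\nabla p_H)=0$. Since $p_H=0$ on $\del U_H\cap\Om^H$, $p_0\ge 0$ on $\Sigma$, and Neumann conditions hold on the lateral boundary, the minimum principle gives $p_H\ge \min(0,p_H^*)$ throughout $\Om^H$. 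To rule out $p_H^*\to-\infty$, I would argue by contradiction: if $p_H^*\to-\infty$ along a subsequence, then the affine-profile analysis of \Cref{lem:WaveSpeed-Ominfty} applied to the cap region $(h(H),H)$ where $s_H\equiv s_H^*$ forces $g_{\!_{F,H}}>0$ with $H-h(H)\to\infty$; together with $\int_{\Om^H}\cut_{\!_>}|\nabla p_H|^2\le C$ and the positivity of $\del_z s_H$ in a neighbourhood of $h(H)$ from below, this would push the limit into the Type I ``large solution'' class of \Cref{prop:SolCase1exists}, contradicting the complementary nature of the two cases. I expect this final reduction to be the main technical obstacle.

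Once $|p_H^*|\le C$ is established, testing \eqref{eq:WeakSolSplus-TTW} with $\phi = p_H - p_H^* - \hat p_0$, where $\hat p_0\in H^1(\Om^H)$ extends $p_0$ and vanishes on $\Sigma^H$, yields $\int_{\Om^H}k(s_H)|\nabla p_H|^2\le C$ uniformly: the $\Sigma^H$-contribution vanishes by construction, the $\Sigma$-contribution is controlled by \eqref{eq:FingrDP}, and the convection term $c\int\del_z s_H\,\phi$ is controlled by $\|\del_z s_H\|_{L^2}\le C$ together with $\|\phi\|_{L^\infty}\le C$. Extracting a subsequence yields a limit $(s,p)$ with $p\in L^\infty(\Om)\cap H^1(\Om)$ and $\del_z s\in H^1(\Om)$ solving \eqref{eq:GDETW} with the Dirichlet data on $\Sigma$, by the same monotonicity/compactness arguments as in Step 2 of \Cref{prop:SolCase1exists}. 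Finally, for the flux inequality I would use that $F_c^H(z) = F_\infty - c\int_0^L s_H(y,H)\,dy$ is independent of $z\in(0,H)$; extracting a further subsequence so that $\int_0^L s_H(\cdot,H)\,dy\to S\in\R$, we obtain $F_c(z)\equiv F_\infty - cS$ on $\Om$. The integrability $\nabla p,\del_z s\in L^2(\Om)$ gives a sequence $z_n\to\infty$ along which $\int_0^L[|\del_z p(y,z_n)|^2+|\del_z s(y,z_n)|^2]\,dy\to 0$; evaluating $F_c(z_n)$ and applying Cauchy--Schwarz yields $F_\infty - cS = g\int_0^L k(s^*(y))\,dy - c\int_0^L s^*(y)\,dy$. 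Since monotonicity $s_H(y,H)\ge s_H(y,z)$ with $H\to\infty$ then $z\to\infty$ gives $S\ge\int_0^L s^*(y)\,dy$, we conclude $g\int_0^L k(s^*(y))\,dy \le F_\infty$.
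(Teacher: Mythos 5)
Your overall architecture (a priori estimates from \Cref{lem:AprioriEstimates}, an $L^\infty$ lower bound for the pressure, a uniform gradient bound, then the flux identity $F_c^H(H)=F_\infty-c\int_0^L s_H(\cdot,H)$ combined with monotonicity of $s_H$ in $z$ and $\nabla p\in L^2$ to get $g\int_0^L k(s^*)\le F_\infty$) matches the paper, and your final flux step is essentially identical to the paper's Step~3. The genuine gap is exactly where you flag it: the lower bound on the pressure. Your plan hinges on bounding $p_H^*=p_H|_{\Sigma^H}$ from below, and the contradiction argument you sketch (``$p_H^*\to-\infty$ would push the limit into the Type~I class, contradicting the complementary nature of the two cases'') is not a valid step. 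The dichotomy of \Cref{theo:main} is organized by the hypothesis on $h(H)$ (bounded versus divergent), not by mutually exclusive conclusions --- both types admit the borderline $g\int_0^L k(s^*)=F_\infty$, and \Cref{prop:SolCase1exists} is proved under the assumption $h(H)$ bounded, so you cannot invoke it to exclude anything under the present hypothesis $h(H)\to\infty$. As written, this step does not close.

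The paper avoids the problem entirely by never trying to control $p_H$ on all of $\Omega^H$: it bounds $p_H$ only on $\Omega^{h(H)}$, which suffices because $h(H)\to\infty$ makes every compact subset of $\Omega$ eventually lie in $\Omega^{h(H)}$. The mechanism is the definition \eqref{eq:DefhH} of $h$: on a set of positive measure in $(0,L)\times(h-1,h)$ one has $\del_z s_H>0$, hence $p_H>p_c(s_H)\ge p_c(s_*)=0$ there, and the uniform Lipschitz bound \eqref{eq:FingrDP} propagates this to $p_H\ge -C_L$ with $C_L=C_P\sqrt{1+L^2}$ on the whole cross-section $\Sigma^{h}$; a maximum principle on $\Omega^h$ (test function $[p_H+C_L]_-$, whose boundary terms vanish and whose interaction with $\del_z s_H$ vanishes because $\del_z s_H>0$ forces $p_H\ge 0$) then yields $-C_L\le p_H\le\bar p$ on $\Omega^h$. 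With this two-sided bound, $p_c^{-1}(p_H)$ stays in a compact subinterval of $(0,1)$, so the weighted term $\int\cut_0\,|\nabla p_H|^2/p_c'(p_c^{-1}(p_H))$ in \eqref{Eq:MainAprioriEstimate1} directly gives $\int_{\Omega^h}|\nabla p_H|^2\le C$ --- no energy test involving $p_H^*$ is needed. I would recommend replacing your Step on $p_H^*$ by this localization to $\Omega^{h(H)}$; your remaining steps then go through.
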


We note that we do not obtain the flux condition \eqref
{eq:ParabolicBC}.

\begin{proof} In this proof, we only write $H\to \infty$
  and $h\to \infty$ for the two sequences. We furthermore use
  $\Om^h=[0,L]\times (0,h)$.

  \smallskip {\em Step 1: $L^\infty$-bound for the pressure.}  The
  upper bound for the pressure was assumed in \eqref{eq:FingrP},
  $p_H\le \bar p$ in $\Om^H$. Our aim in this step is to show a lower
  bound for the pressure.

  On the lower boundary $\Sigma$ there holds $p_H = p_0 \ge 0$.  We
  claim that there is a lower bound also along the upper boundary
  $\Sigma^h$ of $\Omega^h$. Indeed, by definition of $h$ in \eqref
  {eq:DefhH}, there is a subset of non-vanishing measure in
  $(0,L) \times (h-1,h)$ on which $\del_z s_H > 0$ holds, i.e.
  $p_H > p_c(s_H) \ge 0$. The Lipschitz bound \eqref{eq:FingrDP}
  implies that $p_H \ge -C_L$ holds on $\Sigma^h$ for
  $C_L = C_P \sqrt{1 + L^2}$.
      
  We can now exploit a maximum principle to obtain
  \begin{equation}
    -C_L \leq p_H \leq \bar{p} \quad \text{ a.e. in } \Om^h\,.
    \label{eq:PHisBounded}
  \end{equation}
  The maximum principle is derived by using $[p_H + C_L]_-$ as a
  test function in \eqref{eq:TWRichards-TTW}, which results in
  $$\int_{\Om^h}[p_H + C_L]_-\nabla\cdot[k(s_H)\nabla
  p_H]=\int_{\Om^h} [p_H + C_L]_-(c-g k'(s_H))\del_z s_H\,.$$ An
  integration by parts yields
  \begin{align*}
    &\int_{\Om^h}k(s_H)|\nabla [p_H + C_L]_-|^2 =
      \int_{\Sigma^h} [p_H + C_L]_- k(s_H) \del_z p_H\\
    &\qquad -\int_{\Sigma} [p_H + C_L]_- k(s_H) \del_z p_H  
      +  \int_{\Om^h} [p_H + C_L]_- (c-g k'(s_H)) \del_z s_H\,.
  \end{align*}
  As analyzed before, the boundary terms vanish because of
  $p_H + C_L \ge 0$ along $\Sigma$ and along $\Sigma^h$.  Regarding
  the last integral we note that in every point $x$ with
  $\del_z s(x) >0$, there holds
  $p_H(x) \ge p_c(s_H(x))\geq p_c(s_*) = 0$, and hence
  $[p_H + C_L]_- = 0$. This shows that all terms on the right hand
  side vanish.  We obtain \eqref{eq:PHisBounded}.
 
  \smallskip {\em Step 2: A further a priori estimate.}  From the
  uniform pressure bound \eqref{eq:PHisBounded} we conclude that
  $p_c^{-1}(p_H)$ is bounded away from $1$. With this information, the
  bound of \eqref{Eq:MainAprioriEstimate1} provides, with a constant
  $C>0$ independent of $H$, the inequality
  $$ \int_{\Om^h} \cut_0\, |\nabla p_H|^2 + \int_{\Om^H} \cut_{\!_>}\,
  {p_c}'(s_H)  |\nabla s_H|^2 \leq C\,.
  $$ Similarly, \eqref{Eq:MainAprioriEstimate2} implies
  $$
  \int_{\Om^H} \cut_{\!_>}|\nabla p_H|^2 + \int_{\Om^H} |\nabla
  (\del_z s_H)|^2 \leq C.
  $$ Combining both of these inequalities with \eqref{eq:PHisBounded},
  and recalling $\del_z s_H = 0$ in $\Omega^H\setminus \Om^h$, we
  obtain
  \begin{equation}
    \label{eq:bdd-p-5123}
    \max_{\Om^h} |p_H|^2 + \int_{\Om^h} |\nabla p_H|^2 + \int_{\Om^H}
    [|\del_z s_H|^2 + |\nabla \del_z s_H|^2] \leq C\,.
  \end{equation}
  
  \smallskip {\em Step 3: Limit $H\to \infty$.}  Because of
  $h\to \infty$, we find a limiting pair $(s,p)$ such that the local
  convergences of \eqref{eq:Convergence_sHpH} hold for any compact
  subset $\Om'$ of $\Om$. It is straightforward to verify that $(s,p)$
  solves \eqref{eq:GDETW}.  Moreover, \eqref {eq:bdd-p-5123} together
  with $h\to \infty$ implies the additional properties
  $\nabla p\in L^2(\Om)$ (as a bounded solution to an elliptic
  equation) and $\del_z s\in L^2(\Om)$.
  
  \smallskip Regarding the limiting flux, we start from the relation
  $\int_{\Sigma^H} k(s_H)(\p_z p_H +g) = F_\infty$. In order to
  calculate limits, we once more use the quantity $F_c^H(z)$ of \eqref
  {eq:flux-H-z}, which is independent of $z$.  The local strong
  convergence of $s_H$ and the local weak convergence of $\nabla p_H$
  yield, for almost every $z$, as $H\to \infty$,
  \begin{align*}
    &\int_0^L k(s(y,z)) [\del_z p(y,z) + g] - c s(y,z)\, dy
      =:  F_c(z) \\
    &\quad \leftarrow F_c^H(z)
      := \int_0^L k(s_H(y,z)) [\del_z p_H(y,z) + g] - c s_H(y,z)\, dy\\
    &\quad = F_c^H(H) = \int_0^L k(s_H(y,H)) [\del_z p_H(y,H) + g]
      - c s_H(y,H)\, dy\\
    &\quad = F_\infty - c \int_0^L s_H(y,H) \, dy\,.
  \end{align*}
  Because of $s_H(y,H) \ge s_H(y,z)$ for every $z$, and $s_H \to s$,
  there holds
  $$\lim_{z\to\infty} \int_0^L s(y,z)\, dy \le \lim_{H\to\infty}
  \int_0^L s_H(y,H) \, dy\,.$$ Taking in the above calculation both
  limits, $z\to \infty$ and $H\to\infty$, exploiting
  $\nabla p\in L^2(\Om)$, we find
  $$\int_{0}^L g k(s^*(y))\, dy = 
  \lim_{z\to \infty} \int_{0}^L g k(s(y,z))\, dy \le F_\infty \,.
  $$
  This concludes the proof.
\end{proof}

\begin{remark}[Both solution types occur]
  The one-dimensional travelling wave results in
  \cite{el2018traveling} indicate that both solution types exists for
  a given $s_*\in (0,1)$ and $F_\infty$ satisfying \eqref{eq:FingrCF}.
  Type I (large) solutions occur in the one-dimensional model when
  $\t$ is large. On the other hand, if
  $\| p_0 - p_c(s_*) \|_{L^\infty(\Sigma)}$ is small, then Type II
  (small) solutions are expected to occur for small $\t$ values.  Our
  numerical results confirm that both solution types occur.
\end{remark}

We finally want to show that, for a given flux $F_\infty$, it is
possible to find a wave-speed $c$ such that condition
\eqref{eq:ConditionAtSigma} is satisfied.

\begin{theorem}[Selecting a wave-speed $c$ in dependence of $F_\infty$
  and $s_*$]\label{thm:c-F-infty}
  Let $\t>0$, $s^*\in (0,1)$, and boundary data
  $s_0,\,p_0\in C^1(\Sigma)$ be given, $p_c(s_*)\leq p_c(s_0) < p_0$
  on $\Sigma$, furthermore $F_\infty$ in the bounds of
  \eqref{eq:FingrCF}.  We assume that, for all $c\in [c_1, c_2]$ with
  $c_1 := k'(s_*)g$ and $c_2 := g (k(1)-k(s_*))/(1-s_*)$, a sequence
  $(s_H,p_H)$ of solutions to \eqref{eq:TTW} satisfying Assumption
  \ref {ass:unbounded-dom} exists. We consider the corresponding limit
  solutions $(s,p)$ and their fluxes
  \begin{equation}\label{eq:WavespeedSelection}
    F_c = \int_{\Sigma} (k(s_0)[\p_z p + g] - c s_0)\,,
  \end{equation}
  and assume that $F_c$ depends continuously on $c$.  Then there
  exists a wave-speed $\bar{c} \in (c_1, c_2)$ such that the
  corresponding pair $(s,p)$ satisfies \eqref {eq:ConditionAtSigma},
  $F_c = (g k(s_*)-cs_*) L$.
\end{theorem}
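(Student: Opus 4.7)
The plan is to apply the intermediate value theorem to the continuous function
\[
G(c) := F_c - (gk(s_*) - cs_*)L
\]
defined on the interval $[c_1,c_2]$, which is continuous in $c$ by the standing hypothesis on $F_c$. Using the flux conservation \eqref{eq:flux-bottom} and passing to the limit $H\to\infty$, the quantity $F_c$ can be written as the $z\to\infty$ limit of the $z$-independent flux $\int_0^L k(s)[\del_z p+g]-cs\, dy$; for a Type I solution this reduces to $F_c = F_\infty - c\int_0^L s^*(y;c)\,dy$, and for a Type II solution (where $\nabla p\to 0$ as $z\to\infty$) to $F_c = g\int_0^L k(s^*)\,dy - c\int_0^L s^*\,dy$. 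In the Type I case rearrangement gives the clean expression
\[
G(c) = (F_\infty - gLk(s_*)) - c\int_0^L (s^*(y;c)-s_*)\,dy,
\]
so the task reduces to producing a sign change of the right-hand side as $c$ ranges over $[c_1,c_2]$.

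For the lower endpoint $c = c_1 = g k'(s_*)$ I would use only the a priori bound $s^*(y;c_1)\le 1$ and the lower inequality in \eqref{eq:FingrCF}. These give
\[
c_1\int_0^L (s^*(y;c_1)-s_*)\,dy \le c_1 L(1-s_*) = gk'(s_*)L(1-s_*) < F_\infty - gLk(s_*),
\]
where the final strict inequality is exactly the lower bound in \eqref{eq:FingrCF}. Hence $G(c_1) > 0$.

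For the upper endpoint $c = c_2 = g(k(1)-k(s_*))/(1-s_*)$ I aim for the complementary inequality $G(c_2) < 0$. The key observation is that $c_2$ is precisely the Rankine--Hugoniot shock speed connecting the saturations $s_*$ and $1$ for the underlying conservation law, so the limiting profile is expected to saturate, $s^*(y;c_2)\to 1$. Granting this,
\[
c_2\int_0^L (s^*(y;c_2)-s_*)\,dy \to c_2 L(1-s_*) = gL(k(1)-k(s_*)) > F_\infty - gLk(s_*),
\]
where the last strict inequality is the upper bound in \eqref{eq:FingrCF}. Combining the two sign statements with the assumed continuity of $G$, the intermediate value theorem produces $\bar c \in (c_1,c_2)$ with $G(\bar c)=0$, which is exactly \eqref{eq:ConditionAtSigma}.

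The main obstacle will be justifying $G(c_2) < 0$ rigorously. The pointwise bound $s^*\ge s_*$ alone only yields $G(c_2) \le F_\infty - gLk(s_*)$, which is \emph{positive} by the lower bound in \eqref{eq:FingrCF}; one therefore needs a genuine lower bound on $\int_0^L s^*(y;c_2)\,dy$ that is sufficiently close to $L$ to consume the gap $gLk(1)-F_\infty > 0$. A natural route is either to establish monotonicity of $c\mapsto s^*(\cdot;c)$ so that increasing $c$ forces more mass in the finger, or to argue that at $c=c_2$ the Type I (large) branch is selected and $s^*(\cdot;c_2)$ approaches the saturated value $1$ up to an arbitrarily small defect; either way, the strict upper bound $F_\infty < gLk(1)$ provides the slack needed to close the argument. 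A secondary care-point is to handle the Type II case uniformly with Type I, since the two branches give different analytic expressions for $F_c$ even though the resulting $G(c)$ is always well-defined and continuous.
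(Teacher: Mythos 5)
Your overall strategy (the intermediate value theorem applied to $G(c)=F_c-(gk(s_*)-cs_*)L$, with $F_c$ evaluated at $z\to\infty$ and split according to solution type) is exactly the paper's, and your endpoint computation at $c_1$ for Type I solutions matches the paper's line by line. However, there are two genuine gaps. First, at $c=c_1$ you only treat the Type I branch: your chain of inequalities uses $G(c)=(F_\infty-gLk(s_*))-c\int_0^L(s^*-s_*)$, which is false for Type II solutions, where $F_0=g\int_0^L k(s^*)\le F_\infty$ and hence $G(c)=\int_0^L\bigl[g(k(s^*)-k(s_*))-c(s^*-s_*)\bigr]$; the bound $g\int k(s^*)\le F_\infty$ goes the wrong way for your purposes. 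The paper closes this case using the strict convexity of $k$ (Assumption (Ass-k), $k''>0$), which gives $k(s^*)-k(s_*)>k'(s_*)(s^*-s_*)$ pointwise and hence $G(c)>0$ for $c$ near $c_1=gk'(s_*)$.

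Second, your route to $G(c_2)<0$ — arguing that $s^*(\cdot;c_2)$ saturates to $1$ because $c_2$ is a Rankine--Hugoniot speed — is not justified by anything in the paper and is not needed. The paper's mechanism is again convexity: with $\mu=(k(1)-k(s_*))/(1-s_*)$ the chord-slope bound gives $g(k(s^*)-k(s_*))\le g\mu(s^*-s_*)$. In the Type II case, strictness is made quantitative as $\int(k(s^*)-k(s_*))\le(\mu-\e)\int(s^*-s_*)$ by ruling out $s^*\equiv 1$ via Jensen's inequality and $\tfrac1L\int k(s^*)\le F_\infty/(gL)<k(1)$, ruling out $s^*\equiv s_*$ via the boundary data, and ruling out a profile taking only the two extreme values via the Lipschitz bound of \Cref{lem:NablaSH}. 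In the Type I case, the key step you are missing is to rewrite $F_\infty=(g-g_F)\int_0^L k(s^*)$ with $g_F>0$ from \eqref{eq:gFdef}, so that
\begin{equation*}
G(c)=\int_0^L\bigl[g(k(s^*)-k(s_*))-c(s^*-s_*)\bigr]-g_F\int_0^L k(s^*)
\le (g\mu-c)\int_0^L(s^*-s_*)-g_F\int_0^L k(s_0)\,,
\end{equation*}
which is strictly negative for $c$ near $c_2=g\mu$. This resolves precisely the obstacle you identified (that $s^*\ge s_*$ alone leaves $G(c_2)$ positive) without any claim that the profile saturates.
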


\begin{proof}
  We consider the continuous function $G : [c_1, c_2]\to \R$ 
  \begin{equation}
    G(c) := F_c - (g k(s_*)-cs_*) L\,. 
  \end{equation}
  We recall that $G$ depends in an explicit way on $c$, but also
  implicitely, since $s$ and $p$ (and hence $F_c$) depend on $c$.  The
  flux quantity $F_c$ is independent of $z$, we choose to evaluate it
  at $z\to \infty$. We denote the limit of the first two terms as
  $$F_0 := F_0(c) := \lim\limits_{z\to \infty}
  \int_0^L k(s(y,z))[\p_z p(y,z) + g]\,dy\,.$$
  We observe that, by Theorem \ref {theo:main},
  $$
  F_0 =
  \begin{cases}
    F_\infty &\text{ if }\quad g\int_0^L k(s^*)> F_\infty\,,\\
    g\int_0^L k(s^*) &\text{ if }\quad g\int_0^L k(s^*)\le F_\infty\,.
  \end{cases}
  $$
  In both cases holds $F_0 \le F_\infty$ and
  $F_0 \le g\int_0^L k(s^*)$.  The function $G$ can be written as
  $$
  G(c) = F_0 - k(s_*)gL - c\int_0^L (s^*(y)-s_*)\,dy\,.
  $$
  \smallskip {\em Showing $G(c) > 0$ as $c\to c_1$.}  If the
  solution is of Type II (small solution, second case in the above
  distinction), then
  $$
  G(c) = \int_0^L (s^*(y)-s_*)\,dy\
  \left (g\ \frac{\int_0^L (k(s^*(y))-k(s_*))\,dy}
    {\int_0^L (s^*(y)-s_*)\,dy}\ - c\right )\,.
  $$
  We exploit that $s^* > s_0 \ge s_*$ implies, for every $y\in (0,L)$,
  that $k(s^*(y))-k(s_*) > k'(s_*) (s^*(y)-s_*)$. This implies that,
  for $c$ close to $c_1 = k'(s_*)g$, there holds $G(c)>0$. On the
  other hand, if $(s,p)$ is of Type I (large solutions), then
  \begin{align*}
    G(c) &= F_\infty - k(s_*)gL - c\int_0^L (s^*(y)-s_*)\,dy\\
         &\geq F_\infty -  k(s_*)gL - c(1-s_*)L\\
         &\ge g L  k'(s_*)(1-s_*) - c(1-s_*)L + \eps\\
         &= (g k'(s_*) - c) (1-s_*) L + \eps\,,
  \end{align*}
  where we exploited the lower bound
  $g L [k(s_*) + k'(s_*)(1-s_*)] + \eps \le F_\infty$ for some
  $\eps>0$.  We see that, also in this case, for $c$ close to
  $c_1 = k'(s_*)g$, there holds $G(c)>0$.
  
  \smallskip {\em Showing $G(c) < 0$ as $c\to c_2$.}  Consider
  solutions of Type II (small solutions). For
  $\mu:=(k(1)-k(s_*))/(1-s_*)$, we show that in this case, there
  exists $\e>0$ independent of $c\in [c_1,c_2]$ such that
  \begin{equation}
    \int_0^L  (k(s^*)-k(s_*)) \leq \left (\mu  - \e\right) \int_0^L 
    (s^*-s_*)\,.\label{eq:KrLessthanKr1}
  \end{equation}
  Since $(k(s)-k(s_*))/(s-s_*)$ is a strictly increasing function for
  $s>s_*$, $\int_0^L (k(s^*)-k(s_*))= \mu \int_0^L (s^*-s_*)$ if and
  only if $s^*(y)\in \{s_*,1\}$ for all $y\in (0,L)$. From Jensen's
  inequality, one has
  \begin{align}
    k\left (\frac{1}{L}\int_0^L s^* \right )
    &\leq  \frac{1}{L}\int_0^L  k(s^*)
    \leq \frac{F_\infty}{gL}< k(1)\,,\label{eq:IntLessThan1}
  \end{align}
  implying $\frac{1}{L}\int_0^L s^* <1$. Hence, the possibility
  $s*\equiv 1$ in $(0,L)$ is ruled out. Moreover, since
  $s^*>s_0\geq s_*$, the possibility $s*\equiv s_*$ in $(0,L)$ is also
  ruled out. From \Cref{lem:NablaSH}, $\|\nabla s\|_{L^\infty(\Om)}$
  is bounded. Hence $s^*$ cannot take both the values $s_*$ and $1$
  without transitioning through the intermediate values. Thus
  \eqref{eq:KrLessthanKr1} holds.
  
  If the solution is of Type II, then, for $c$ close enough to
  $g\mu=g(k(1)-k(s_*))/(1-s_*)$, we obtain from
  \eqref{eq:KrLessthanKr1},
  \begin{align*}
    G(c)&= F_0 - k(s_*)gL - c\int_0^L (s^*-s_*)
          =  \int_0^L [g(k(s^*)-k(s_*)) - c (s^*-s_*)]\\
        &\leq \int_0^L (s^*-s_*) \left [g\mu -g\e - c\right ]\leq 0\,.
  \end{align*}
  If the solution is of Type I, then for $g_F> 0$ as defined in
  \eqref{eq:gFdef} (see also \Cref{prop:SolCase1exists}), one has
  \begin{align*}
    G(c)&= F_\infty - k(s_*)gL - c\int_0^L (s^*-s_*)\\
        &=  \int_0^L [g(k(s^*)-k(s_*))- c (s^*-s_*)] - g_F \int_0^L k(s^*)\\
        &\leq (g\mu-c)\int_0^L (s^*-s_*) -g_F \int_0^L k(s_0)\,.
  \end{align*}
  Consequently, $G(c)<0$ for $c$ close enough to $c_2 =g\mu$.  Hence,
  there exists a zero $\bar{c}$ of $G(\cdot)$ in $(c_1,c_2)$. This was
  the claim.
\end{proof}

\section{Numerics}
\label{sec.numerics}

\subsection{Numerical solution of system \eqref{eq:TTW}}
\label{ssec.num-p1}

The primary numerical task is to solve system \eqref{eq:TTW} for $s$
and $p$, where the speed $c$ and the total influx $F_\infty$ are
given. The existence of a solution was established in Theorem \ref
{theo:Wellposedness-TTW}. We use an iterative method in order to deal
with the nonlinearities. With a positive number $M>0$, we use the
iteration $(s^{i-1}, p^{i-1}) \mapsto (s^i, p^i)$ that is given by
\begin{subequations}\label{eq:numItScheme}
  \begin{align}
    \label{eq:numItScheme1}
    M p^i - \nabla \cdot[k(s^{i-1})(\nabla p^i + ge_z)]
    &= Mp^{i-1} - \frac1{\tau}[p^{i-1} - p_c(s^{i-1})]_+\,,\\
    \label{eq:numItScheme2}
    \partial_z s^i- \eps \Delta s^i
    &= \frac1{c\tau}[p^{i} - p_c(s^{i-1})]_+\,.
  \end{align}
\end{subequations}
The equations are solved in the rectangular computational domain
$\Omega^H$ for some initial guess $(s^0,\, p^0)$. They are
supplemented by the boundary conditions \eqref
{eq:LowerBoundary-TTW}--\eqref {eq:Flux-TTW} and no-flux conditions at
the lateral boundaries.

For $\eps=0$, a fixed point of the iteration scheme
\eqref{eq:numItScheme} provides a solution of \eqref{eq:TTW}. The
set-up is such that the equations can be solved subsequently: One can
solve the first equation for $p^i$, then the second equation for
$s^i$. The iteration strategy is based on the L-scheme
\cite{MITRA2018}, the iteration is expected to converge for
$M\geq \t^{-1}$, irrespective of the initial guess. We introduce an
elliptic regularization in the second equation (which is first order
in $s$), numerical experiments are run with a small number $\eps>0$.

In order to discretize \eqref{eq:numItScheme}, we introduce a uniform
triangulation $\Om^H_{\mathrm{h}}$ of the domain $\Omega^H$ and apply
linear finite elements. In this sense, the discretization is based on
the weak formulation in \eqref {eq:W1q-s-H}--\eqref
{eq:WeakSolSplus-TTW}. The resulting scheme has been implemented in
the adaptive finite element tool box AMDiS \cite{amdis}. The linear
equations arising from the discretization are treated with the direct
solver UMFPACK, \cite{umfpack}.

The physical parameters of the problem are chosen as in \cite
{LamaczRaetzSchweizer2011},
\begin{align}
  \label{eq:num:functions}
  p_c(s) = s\,, \quad 
  k(s)=
  \begin{cases}
    \kappa & \quad \text{for} \quad s < a\,,\\
    \kappa + (s - a)^2 & \quad \text{for} \quad s \ge a\,,
  \end{cases}
\end{align}
and 
\begin{equation*}
  \label{eq:num:parameters}
  g = 1\,, \quad \tau = 2\,, \quad \kappa = 0.001\,, \quad a = 0.32\,, \quad
  F_\infty = 0.056\,.
\end{equation*}
The domain is $\Omega^H = (-1,1)^2$; up to a shift of the domain, this
coincides with $L=2$ and $H=2$ in analytical results.  The parameters
for the numerical code are
\begin{equation*}
  M = 4\,,\quad \eps = 0.0008\,.
\end{equation*}
The initial values for the iteration have been chosen as
\begin{equation*}
  p^0 = 4.5\,, \quad s^0 = 10^{-5}\,.  
\end{equation*}
Regarding the lower boundary, we use the constant function
$s_0 = 10^{-5}$ and the slightly perturbed pressure boundary condition
\begin{equation*}
  p_0(y) = p_c(s_0) + \delta e^{-(y/d)^2}\,.
\end{equation*}
The postive parameter $\delta = 0.078$ measures the amplitude of the
perturbation and the scaling factor $d=0.25$ measures the width of the
perturbation.

Figure \ref {fig:num:varyCConstSatInit} shows results for four
different values of the speed $c$. We see a remarkable difference
between the solution for $c=0.04785$ and the solution for
$c=0.04786$. The abrupt change finds its counterpart in Theorem \ref
{theo:main} (we recall that the theorem is treating unbounded domains
while the numerical results are for a fixed bounded domain): The two
images on the left show Type I solutions, i.e., ``large solutions''
with a free boundary. The two images on the right show ``small
solutions''.
\begin{figure}[H]
  \centering
  \includegraphics[width=0.21\textwidth]{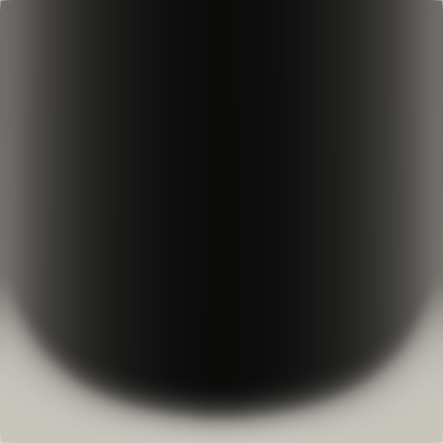}
  \includegraphics[width=0.21\textwidth]{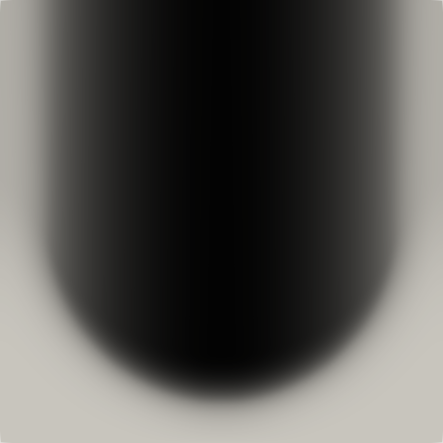}
  \includegraphics[width=0.21\textwidth]{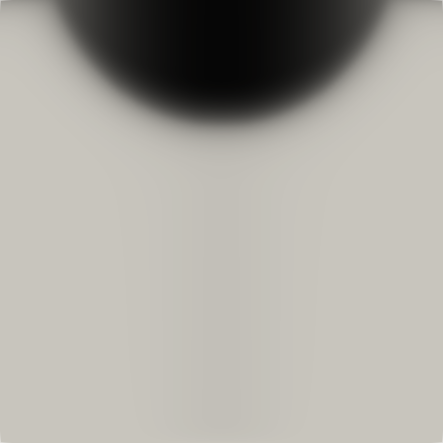}
  \includegraphics[width=0.21\textwidth]{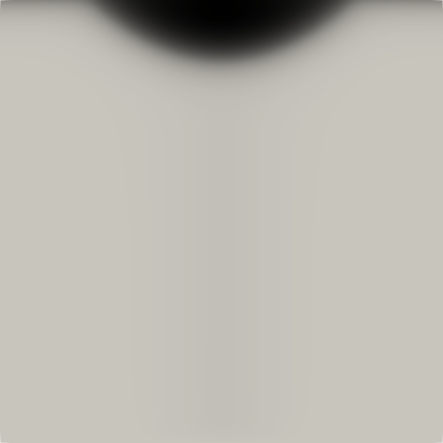}
  \includegraphics[width=0.09\textwidth]{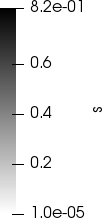}
  \caption{The discrete solutions $s_{\!_\mathrm{h}}$ of the iteration
    scheme for (from left to right) $c=0.04$, $c=0.04785$,
    $c=0.04786$, $c=0.05013166020$.}
  \label{fig:num:varyCConstSatInit}
\end{figure}

In the above experiments, we have solved system \eqref {eq:TTW} for
different values of $c$. We now ask: What is the correct wave speed
$c$ in the sense of \eqref {eq:ConditionAtSigma}? We use the following
finite domain approximations: $s_*$ can be neglected, hence, in
particular, $k(s_*) = \kappa$. Furthermore, $s_0$ is constant and so
small that also $k(s_0)$ can be replaced by $\kappa$. Condition \eqref
{eq:ConditionAtSigma} then reads
$G_1(c) := c - (\kappa \int_\Sigma \del_z p_{\!_\mathrm{h}})/(L s_0)
\stackrel{!}{=} 0$. We find the values as displayed in Table
\ref{table1}. 
\begin{table}[ht]
  \centering
  \begin{tabular}[t]{| c | c | c |}
    \hline
    $c$ & $0.0476$ & $0.0477$ \\
    \hline
    $G_1(c)$ & $0.0476 + 0.0218 > 0$ & $0.0477 - 0.3304 < 0$ \\
    \hline
  \end{tabular}
  \caption{\label{table1} Values for $G_1(c)$ for various $c$-values.}
\end{table}
% old: $c_{\!_\mathrm{h}} \approx -0.0217813$ for $c = 0.0477$ and
% $c_{\!_\mathrm{h}}\approx 0.3304065$ for $c = 0.0476$, and
%
We conclude that $G_1(\bar{c}_1) = 0$ is satisfied for some
$\bar{c}_1 \in [0.0476,0.0477]$. Up to the above finite domain
approximations, we expect the travelling wave speed to be about
$0.0477$.  This is remarkably close to the jump point, compare Figure
\ref {fig:num:varyCConstSatInit}. We furthermore note that the value
is not far from the value $c=0.053$ that can be extracted from
simulation results reported in \cite{LamaczRaetzSchweizer2011}.

\subsection{Path-following algorithm to adjust $c$}
\label {ssec.num-p2}

So far, for each value of $c$, we started the iterative scheme
\eqref{eq:numItScheme} with constant functions $s^0$ and $p^0$ as
initial guess.  Since we are interested in solutions for a whole range
of $c$-values, there is a very natural idea to speed up calculations:
After having changed the value of $c$, instead of starting the
iterative scheme from scratch, we start the iteration with the
solution of the last value of $c$. Thereby, we increased $c$ in every
interation step by $10^{-4}$ in some experiments, by $10^{-11}$ in
others. 

Interestingly, it turns out that this scheme produces results that are
different from those reported in \Cref {ssec.num-p1}.
Results are displayed in Figure \ref{fig:num:varyC}, and once more, we
observe that, below a critical value for $c$, solutions are ``large
solutions'', above the critical value, we find ``small
solutions''. This feature is as in the sequence of Figure \ref
{fig:num:varyCConstSatInit}, but the critical value of $c$ is now
different: It is about $\bar{c}_2 = 0.050$ and no longer about $\bar{c}_1 =
0.048$. For values of $c$ below $\bar{c}_1$ and for values above $\bar{c}_2$, the
results of the two schemes coincide.

\begin{figure}[H]
  \centering
  \includegraphics[width=0.21\textwidth]{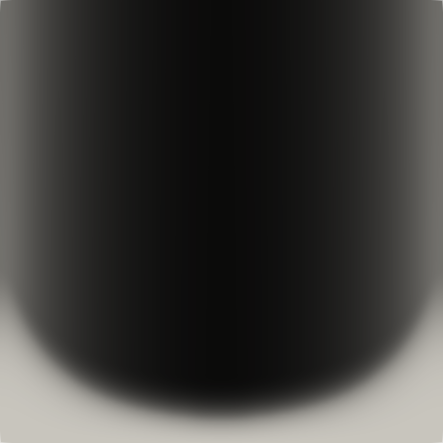}
  \includegraphics[width=0.21\textwidth]{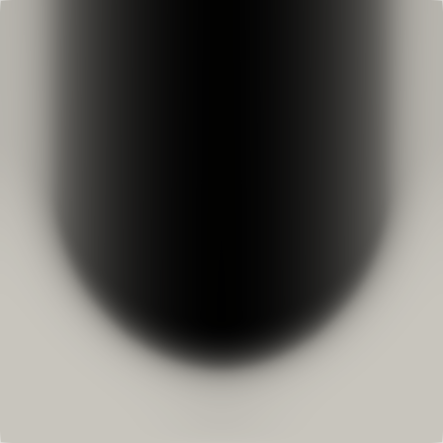}
  \includegraphics[width=0.21\textwidth]{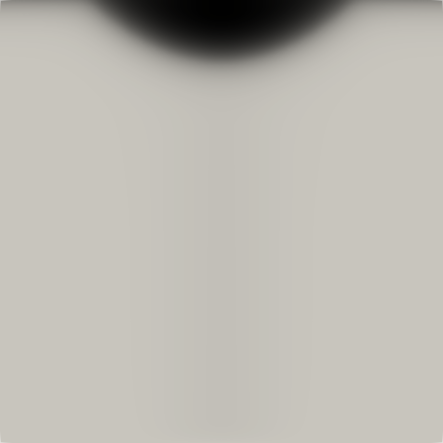}
  \includegraphics[width=0.21\textwidth]{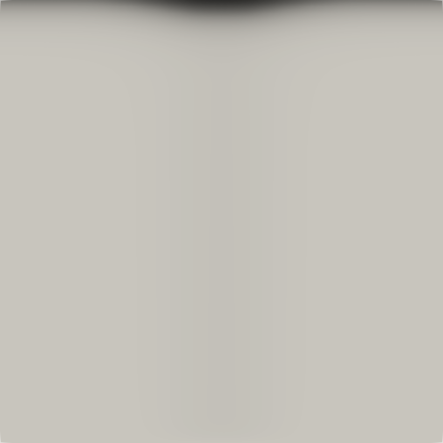}
  \includegraphics[width=0.09\textwidth]{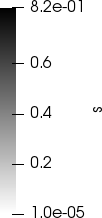}
  \caption{Plots of the discrete solutions $s_{\!_\mathrm{h}}$ of the
    path-following iteration scheme for (from left to right) $c=0.04$,
    $c=0.05013166020$, $c=0.05013166023$, $c=0.0625$.}
  \label{fig:num:varyC}
\end{figure}

We conclude with an evaluation of the integral condition in Theorem
\ref{theo:main}, where the criterion for a ``large solution'' was
$g \int_0^L k(s^*(y))\, dy \geq F_\infty$ for the saturation values
$s^*(y) := \lim_{z\to \infty} s(y,z)$ at infinity.  With the
approximation $s^* \approx s|_{\Sigma^H}$ and with
\eqref{eq:Flux-TTW}, the criterion for a ``large solution'' reads
\begin{equation}
  \label{eq:intForSolTypes}
  G_2(c) := \int_{\Sigma^H} k(s) \partial_z p \leq  0\,.
\end{equation}
Our simulations yield the values in Table \ref{table2}. We observe
that the change of sign of $G_2$ occurs only after the point that the
solution switched to the ``small solution''.
\begin{table}[ht]
  \centering
  \begin{tabular}[t]{| c | c | c | c | c |}
    \hline
    $c$ & $0.04$ & $0.05013166020$ & $0.05013166023$ & $0.0625$ \\
    \hline
    $G_2(c)$ & $-0.1948$ & $-0.1630$
                                   & $-0.1492$ & $0.0183$ \\
    \hline
  \end{tabular}
  \caption{\label{table2} Values for $G_2(c)$ for various $c$-values.}
\end{table}

Our observations may be interpreted as follows: For a range of values
of $c$, there are two solutions of system \eqref{eq:TTW}. This is not
in contradiction with our analysis, since Theorem \ref
{theo:Wellposedness-TTW} provides the existence, but not the
uniqueness of solutions. A numerical scheme has the tendency to find
the ``stable'' solution (``stable'' has to be interpreted
appropriately). In a path-following code as described here (in
\Cref{ssec.num-p2}), due to numerical stabilization
aspects, the code can follow one path beyond the point where it looses
stability. We conjecture that this is what is visible in the
observation $\bar{c}_2 > \bar{c}_1$.

\section*{Conclusions}

We studied the travelling wave equations for a porous media imbibition
problem with hysteresis. Denoting by $c$ the unknown speed of the
travelling wave, we treat a free boundary problem with an additional
parameter.  Our analysis shows that, after a domain truncation and for
boundary conditions within physically reasonable limits: (i) For a
prescribed speed $c$, travelling wave solutions exist. In the limit of
infinite domains, different types of limit solutions can occur.  (ii)
A critical wave speed $c$ can be selected by a flux condition. (iii)
Numerical experiments provide solutions with the shape of a finger.
We find values of $c$ that are in good agreement with time-dependent
calculations.  Different numerical algorithms yield slightly different
values for $c$, an effect that may be related to non-uniqueness of
solutions.

\appendix 
\section{Appendix}

The following result on solution sequences $(s_H,p_H)$ does not rely
on Assumption \ref {ass:unbounded-dom}, but follows directly from the
variational principle.

\begin{lemma}[Large solution sequences have unbounded pressure]
  \label{lem:unbounded-p-criterion}
  For a sequence $0 < H\to \infty$, let $(s_H,p_H)$ be solutions to
  \eqref{eq:TTW}. We assume that, for some height parameter $z_0>0$
  and some bound $C_k>0$, every solution $s_H$ satisfies the integral
  condition
  \begin{equation}
    \label{eq:gk-large}
    \int_0^L g k(s_H(y,z_0))\, dy \ge C_k > F_\infty\,.
  \end{equation}
  In this situation, the sequence of pressure functions is unbounded,
  \begin{equation}
    \label{eq:press-unbounded}
    \| p_H \|_{L^\infty} \to \infty\,.
  \end{equation}
  In particular, it generates a ``large'' Type I solution.
\end{lemma}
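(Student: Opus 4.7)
The plan is to argue by contradiction: assume the sequence $\{p_H\}$ is uniformly bounded in $L^\infty$ and derive a contradiction from the conservation of the $z$-flux combined with the integral lower bound \eqref{eq:gk-large}.

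The first step exploits the fact that the quantity $F^H(z) := \int_0^L k(s_H(y,z))[\del_z p_H(y,z) + g]\,dy$ satisfies, by integration of \eqref{eq:TWRichards-TTW} in $y$ and $z$,
\begin{equation*}
  F^H(z) = F_\infty - c \int_0^L [s_H(y,H) - s_H(y,z)]\, dy \leq F_\infty
\end{equation*}
for every $z\in (0,H)$, where I used $\del_z s_H \geq 0$ (from \eqref{eq:HysDyn-TTW}) and the boundary condition \eqref{eq:Flux-TTW}. By the same monotonicity, $s_H(y,z) \geq s_H(y,z_0)$ for $z\geq z_0$, so $\int_0^L g\, k(s_H(y,z))\, dy \geq C_k$ for all such $z$. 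Subtracting the gravity contribution from $F^H(z) \leq F_\infty$ yields the pointwise estimate
\begin{equation*}
  \int_0^L k(s_H(y,z))\,\del_z p_H(y,z)\, dy \;\leq\; F_\infty - C_k \;=:\; -c_0 \;<\; 0
  \qquad \text{for all } z\in [z_0,H].
\end{equation*}

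The second step is to integrate this in $z$ from $z_0$ to $H$ and perform an integration by parts in $z$, using that $p_H \equiv p_H^*$ is constant on $\Sigma^H$:
\begin{equation*}
  p_H^* \int_0^L k(s_H(y,H))\, dy - \int_0^L k(s_H(y,z_0)) p_H(y,z_0)\, dy - \int_{z_0}^H\!\int_0^L k'(s_H)\, \del_z s_H\, p_H\, dy\, dz \;\leq\; -c_0 (H-z_0).
\end{equation*}
Assuming for contradiction that $\|p_H\|_{L^\infty} \leq M$ for all $H$, each of the three terms on the left is bounded independently of $H$: the first two boundary integrals by $M L k(1)$, and the mixed term is controlled by
\begin{equation*}
  M \int_0^L [k(s_H(y,H)) - k(s_H(y,z_0))]\, dy \;\leq\; M L k(1),
\end{equation*}
since the $z$-integral of $k'(s_H)\del_z s_H$ telescopes thanks to $\del_z s_H \geq 0$. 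Thus the left-hand side stays bounded while the right-hand side tends to $-\infty$ as $H\to\infty$, a contradiction, proving \eqref{eq:press-unbounded}.

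The final conclusion that this produces a Type I solution follows from Theorem~\ref{theo:main}: a Type II (small) solution has $p\in L^\infty(\Om)$, and a uniform $L^\infty$ pressure bound along the sequence would persist to the limit, contradicting the unboundedness just shown. No step looks like a serious obstacle; the only delicate point is the sign-book\-keeping in the integration by parts and the use of monotonicity $\del_z s_H \geq 0$ both to propagate the lower bound \eqref{eq:gk-large} to all $z\geq z_0$ and to make the mixed term telescope.
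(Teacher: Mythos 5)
Your proof is correct, but it follows a genuinely different route from the paper. The paper exploits that $p_H$ is the \emph{minimizer} of the functional $A$ from \eqref{eq:A-var-principle}: it bounds $A(p_H)$ from below via a Jensen-type inequality (Lemma \ref{lem:Jensen}), constructs an explicit competitor $\tilde p_H(y,z)=-g_F(z-1)$ with an optimized slope $g_F=g-gF_\infty/\bar C_k$, and derives the contradiction by comparing the coefficients of the terms growing linearly in $H$. You instead use only the PDE structure: the $z$-independence of the flux $F_c^H$ (already recorded around \eqref{eq:flux-z}), the monotonicity $\del_z s_H\ge 0$ from \eqref{eq:HysDyn-TTW} to push the hypothesis \eqref{eq:gk-large} up to all $z\ge z_0$ and to obtain $\int_0^L k(s_H)\del_z p_H\,dy\le F_\infty-C_k<0$, and then a single integration by parts in $z$ in which the mixed term telescopes because $\del_z k(s_H)\ge 0$. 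Your bookkeeping is right (all three left-hand terms are bounded by $MLk(1)$ under the contradiction hypothesis, while the right-hand side is $\le -c_0(H-z_0)\to-\infty$), and the strengthening from ``not uniformly bounded'' to $\|p_H\|_{L^\infty}\to\infty$ follows by applying the argument along any subsequence, exactly as implicitly done in the paper. Your approach is more elementary and, notably, does not use that $p_H$ arises from the variational principle — it applies to any $TW_H$-solution — whereas the paper's appendix explicitly advertises its argument as a consequence of the minimization property. One small remark: your final step (unbounded pressure $\Rightarrow$ Type I) is asserted at roughly the same level of detail as in the paper; strictly, one should argue via the dichotomy of Theorem \ref{theo:main} that the Type II alternative forces $h(H)\to\infty$ and hence the uniform bound \eqref{eq:PHisBounded} on $\Om^{h(H)}$, which your estimate rules out. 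This is a presentational point shared with the original, not a gap specific to your argument.
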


\begin{proof}
  For a contradiction argument we assume that, for some $\bar p>0$,
  the pressure functions are bounded, $|p_H| \le \bar p$ on
  $\Om^H$. We recall that $p_H$ is the minimizer for the functional
  $A$ of \eqref {eq:A-var-principle}, for given $s = s_H$. This
  provides a lower bound for $A$: For any function $\fhi\in X_{p_0}$,
  there holds, by Lemma \ref {lem:Jensen}, 
  \begin{align*}
    A(\fhi) \ge A(p_H)
    &\ge \int_{\Om^H} \frac12 k(s_H) |\nabla p_H + g e_z|^2
      - F_\infty L \bar p\\
    &\ge \frac12 g^2 \left(\int_{\Om^H} k(s_H)\right)
      - C_1(\bar p) - F_\infty L \bar p\,.
  \end{align*}

  Our aim is to find a contradiction, which we obtain by constructing
  a comparison function with lower energy. We choose a function
  $\tilde p_H$ that connects, in the domain $\{ z\in (0,1)\}$, the
  boundary data $p_0$ in a smooth way with $\tilde p_H \equiv 0$ for
  $z=1$. For larger $z$, we set $\tilde p_H(y,z) = -g_F (z-1)$, where
  the coefficient $g_F\in (0,g)$ is chosen below. We calculate for the
  energy
  \begin{align*}
    A(\tilde p_H) \le C_2
    + \frac{1}{2} | g - g_F|^2 \left(\int_{\Om^H} k(s_H)\right)
    + F_\infty H g_F\,.
  \end{align*}

  Combining the two inequalities and using
  $\bar C_k := \left( g \int_0^L \int_{z_0}^H k(s_H)\right)/(H-z_0)$,
  we find
  \begin{equation}\label{eq:834z}
    \frac12 g \bar C_k H \le C_3 + F_\infty H g_F
    + \frac{H \bar C_k}{2g} | g - g_F|^2 \,.
  \end{equation}
  Optimizing in $g_F$ leads to the choice $g_F := g - q$ with
  $q := (gF_\infty)/\bar C_k < g$. In order to compare the prefactors of
  $H$ on both sides we study
  \begin{align*}
    &\frac12 \bar C_k g - \frac{\bar C_k}{2g} | g - g_F|^2 - F_\infty  g_F
    =\frac12 \bar C_k g - \frac{\bar C_k}{2g} | q |^2 - F_\infty  (g-q)\\
    &\qquad = \frac{\bar C_k}{2g}\left(g^2 - q^2 - 2 q (g-q)\right)
      = \frac{\bar C_k}{2g}\left(g^2 +  q^2 - 2 q g \right)
      = \frac{\bar C_k}{2g} (g-q)^2 > 0\,.
  \end{align*}
  For large $H$, this yields a contradiction in \eqref {eq:834z}.
\end{proof}

\begin{lemma}[A Jensen type inequality]
  \label{lem:Jensen}
  For $\Omega^H = (0,L)\times (0,H)$ with points $x = (y,z)$,
  $k:\Omega^H\to [0,k_0]$ monotonically increasing in $z$, and
  $u : \Omega^H\to \R$ with the uniform bound
  $\| u \|_{L^\infty} \le \bar u$, there exists a constant
  $C_1 = C_1(\bar u, k_0)$, independent of $H$, such that
  \begin{equation}
    \label{eq:Jensen}
    \int_{\Omega^H} k\, |\nabla u + g e_z|^2 \ge -C_1 + g^2 \int_{\Omega^H} k\,.
  \end{equation}
\end{lemma}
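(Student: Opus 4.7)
The plan is to expand the square on the left-hand side and reduce the inequality to a uniform bound on the cross term $2g \int_{\Omega^H} k\, \del_z u$, which can be controlled using the monotonicity of $k$ in $z$ via integration by parts. Expanding,
\begin{equation*}
\int_{\Omega^H} k\,|\nabla u + g e_z|^2 = \int_{\Omega^H} k\,|\nabla u|^2 + 2g \int_{\Omega^H} k\, \del_z u + g^2 \int_{\Omega^H} k\,.
\end{equation*}
The first term is nonnegative and can be discarded, the third is exactly the expression we want to retain on the right-hand side, so the claim reduces to producing a constant $C_0 = C_0(\bar u, k_0)$, independent of $H$, with $\bigl|\int_{\Omega^H} k\, \del_z u \bigr| \le C_0$.

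Next I would integrate by parts in the $z$-variable:
\begin{equation*}
\int_{\Omega^H} k\, \del_z u \, dy\, dz = \int_0^L \bigl[k(y,H)\, u(y,H) - k(y,0)\, u(y,0)\bigr]\, dy - \int_{\Omega^H} (\del_z k)\, u\,.
\end{equation*}
The boundary terms are bounded in absolute value by $2 L k_0 \bar u$, using $|u| \le \bar u$ and $0 \le k \le k_0$. For the interior term, the essential input is the monotonicity assumption: $\del_z k$ is a nonnegative measure, and its total mass on each vertical slice $\{y\}\times (0,H)$ equals $k(y,H) - k(y,0) \le k_0$, independently of $H$. Therefore
\begin{equation*}
\Bigl|\int_{\Omega^H} (\del_z k)\, u\Bigr| \le \bar u \int_{\Omega^H} \del_z k \le \bar u\, L\, k_0\,.
\end{equation*}

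Combining the two estimates gives $\bigl|\int_{\Omega^H} k\, \del_z u \bigr| \le 3 L k_0 \bar u$, and setting $C_1 := 6\,g\,L\,k_0\,\bar u$ yields the claimed inequality. I do not expect a serious obstacle; the only subtlety is that $k$ is only assumed monotone, so the integration by parts must be read in the distributional (or BV) sense in $z$. This is harmless because, in the intended application, $k = k(s_H)$ with $s_H \in H^1$ monotone in $z$, so $\del_z k = k'(s_H)\, \del_z s_H \ge 0$ in $L^2$ and all integrations are classical; alternatively, one approximates $k$ by smooth monotone functions and passes to the limit.
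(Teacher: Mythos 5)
Your proof is correct, and the core estimate is the same as the paper's: both arguments hinge on bounding $\int_{\Omega^H} k\,\del_z u$ by integrating by parts in $z$ and using the monotonicity of $k$ to control $\int_{\Omega^H}\del_z k$ by its total mass $\le L k_0$. Where you differ is in the packaging. The paper first drops the $\del_y u$ contribution, writes the remaining integral as $m\,M(\psi(\del_z u))$ with the $k$-weighted averaging operator $M$ and $\psi(\xi)=|\xi+g|^2$, and invokes Jensen's inequality to get $m\,\psi(M(\del_z u))\ge m g^2 - 2gm|M(\del_z u)|$. You instead expand the square directly, discard the nonnegative term $\int k|\nabla u|^2$, and are left with exactly the cross term $2g\int k\,\del_z u$ plus $g^2\int k$. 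Since $m\,M(\del_z u)=\int k\,\del_z u$, the two reductions are arithmetically equivalent; your version is the more elementary one, as it shows the Jensen step is not actually needed (expanding the square and dropping a nonnegative term yields a slightly stronger inequality than the convexity bound). Your remark on reading $\del_z k$ as a nonnegative measure is the right way to handle the regularity of $k$, and your constant $C_1 = 6gLk_0\bar u$ matches the paper's $6gk_0\bar u$ up to the factor $L$, which the paper appears to have absorbed (L is a fixed geometric constant, so this is immaterial for the stated $H$-independence).
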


\begin{proof}
  We use the averaging operator $M : L^2(\Omega^H) \to \R$, defined
  by
  $$M(v) := \left( \int_{\Omega^H} k v\right) \big/ \left(
    \int_{\Omega^H} k \right)\,.$$ This operator is linear and maps
  the constant function $v\equiv a \in \R$ to $M(v) = a$. We
  furthermore use the convex function $\psi : \R\to \R$,
  $\xi \mapsto | \xi + g |^2$.  Jensen's inequality provides
  $$M(\psi(\del_z u)) \ge \psi(M(\del_z u))\,.$$
  In our setting and with $m := \int_{\Omega^H} k$, this yields
  \begin{align*}
    \int_{\Omega^H} k\, |\nabla u + g e_z|^2
    &\ge \int_{\Omega^H} k\, |\del_z u + g|^2
      = m\, M(\psi(\del_z u))
      \ge m\, \psi(M(\del_z u))\,.
  \end{align*}
  We calculate, using that $k$ is increasing in $z$,
  \begin{align*}
    |M(\del_z u)| &= \left| \frac1{m} \int_{\Omega^H} k \del_z u \right|
                    \le \frac1{m} \left| \left. \int_0^L k\, u \right|_0^H
                    - \int_{\Omega^H} \del_z k\, u \right|\\
    &\le \frac1{m} \left( 2 k_0 \bar u + \bar u \int_{\Omega^H} \del_z k \right)
    \le \frac{3k_0 \bar u}{m}\,.
  \end{align*}
  Inserting above we obtain
  \begin{align*}
    \int_{\Omega^H} k\, |\nabla u + g e_z|^2
      \ge m\, \psi(M(\del_z u))
    = m\, | g + M(\del_z u))|^2
    \ge m g^2 - 6 g k_0 \bar u\,.
    % - \frac{9 k_0^2 \bar u^2}{m}\,.
  \end{align*}
  This shows the claim.
\end{proof}

\begin{lemma}[Lipschitz continuity of $s_H$]
  \label{lem:NablaSH}
  Let $F_\infty, c, \t, s_* > 0$ and $s_0,p_0\in C^1(\Sigma)$ with
  $s_*\leq s_0<1$ and $p_0 \ge p_c(s_0)$ be fixed. For $H>0$, let
  $(s_H,p_H)$ be the $TW_H$-solution to \eqref{eq:TTW} satisfying
  \eqref{eq:FingrDP} and \eqref {eq:ass-s-regularity}. Then, for
  $\rho=\min\{{p_c}'\}>0$, there holds
  \begin{equation}
    \label{eq:s-Lip-claim}
    \|\del_z s_H\|_{L^\infty(\Om^H)}, \|\del_y s_H\|_{L^\infty(\Om^H)}\leq C_P/\rho +
    \| \del_y s_0 \|_{L^\infty(\Sigma)} +
    \tfrac{1}{c\t}\| p_0-p_c(s_0) \|_{L^\infty(\Sigma)} =: C_s\,.
  \end{equation}
\end{lemma}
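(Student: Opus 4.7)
The plan is to reduce each of the two bounds to a scalar ODE comparison in the vertical direction. The hysteresis equation \eqref{eq:HysDyn-TTW} reads $c\t\del_z s_H = [w]_+$ with $w := p_H - p_c(s_H)$, and for each fixed $y$ this is a first-order ODE in $z$. Both $\|\nabla p_H\|_{L^\infty}\le C_P$ from \eqref{eq:FingrDP} and the lower bound $p_c'\ge \rho>0$ are the crucial ingredients.

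First, I would bound $\del_z s_H$. Differentiating $w$ in $z$, in the open set $\{w>0\}$ one has
\begin{equation*}
c\t\,\del_z w \;=\; c\t\,\del_z p_H - p_c'(s_H)\,[w]_+ \;\le\; c\t\, C_P - \rho\, w\,.
\end{equation*}
A scalar comparison with the ODE $\dot u = C_P - (\rho/(c\t))u$ then gives that, on any connected $z$-interval $(z_1,z_2)$ where $w(y,\cdot)>0$, the value of $w$ cannot exceed $\max\{w(y,z_1),\, c\t C_P/\rho\}$; at the left endpoint either $z_1=0$ (so $w(y,z_1)=p_0(y)-p_c(s_0(y))\ge 0$) or $w(y,z_1)=0$. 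Taking the supremum over $(y,z)\in\Om^H$, dividing by $c\t$, and using $p_0\ge p_c(s_0)$, one obtains
\begin{equation*}
\|\del_z s_H\|_{L^\infty(\Om^H)} \;\le\; \frac{C_P}{\rho} + \frac{1}{c\t}\,\|p_0 - p_c(s_0)\|_{L^\infty(\Sigma)}\,.
\end{equation*}

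Second, for $\del_y s_H$ I would differentiate the hysteresis equation in $y$, which is legitimate thanks to \eqref{eq:ass-s-regularity}: setting $u := \del_y s_H$, the chain rule for Sobolev functions yields (in the distributional sense)
\begin{equation*}
c\t\,\del_z u \;=\; \bigl(\del_y p_H - p_c'(s_H)\,u\bigr)\,\mathbf 1_{\{w>0\}}\,,\qquad u|_{z=0} = \del_y s_0\,.
\end{equation*}
Thus, in the inactive region $\{w\le 0\}$ the function $u$ is $z$-independent, while in the active region $\{w>0\}$ it satisfies a linear scalar ODE in $z$ with forcing $|\del_y p_H|/(c\t)\le C_P/(c\t)$ and linear dissipation at rate at least $\rho/(c\t)$. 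A comparison argument fully analogous to Step 1, applied to $\pm u$, yields $|u(y,z)|\le \max\{|\del_y s_0(y)|,\ C_P/\rho\}$ along each $y$-slice. Adding the two bounds proves the claim with the constant $C_s$ stated in \eqref{eq:s-Lip-claim}.

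The main technical obstacle is justifying the differentiation of $[w]_+$ across the free boundary $\{w=0\}$ to obtain the ODE for $u$. With $s_H,\del_z s_H\in H^1(\Om^H)$ the Sobolev chain rule applies and the above reasoning runs on almost every $y$-slice. If one prefers to avoid invoking \eqref{eq:ass-s-regularity} directly, the equivalent difference-quotient approach works: write the hysteresis equation at $y$ and at $y+h$, subtract, use that $[\cdot]_+$ is 1-Lipschitz and that $p_c$ is monotone with $p_c'\ge\rho$, and obtain an integral Gr\"onwall inequality for $s_H(y+h,\cdot)-s_H(y,\cdot)$ in $z$ with forcing controlled by $h\,\|\nabla p_H\|_{L^\infty}$ and initial data $h\,\|\del_y s_0\|_{L^\infty}$; passing $h\to 0$ gives the same pointwise bound on $\del_y s_H$.
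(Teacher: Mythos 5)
Your proof is correct and follows essentially the same route as the paper's: differentiate the hysteresis relation in the relevant direction and run a scalar ODE comparison in $z$, with $p_c'\ge\rho$ providing the dissipation, the bound $C_P$ controlling the forcing, and the boundary data at $z=0$ fixing the initial value. The only difference is technical packaging — the paper multiplies by a regularized sign function to obtain a differential inequality for $|\partial_{x_j}s_H|$, whereas you apply the comparison to $w=p_H-p_c(s_H)$ and to $\pm\,\partial_y s_H$ directly.
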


\begin{proof}
  To prove the lemma, we consider a regularization of the signum
  function, denoted as $\sgn_\e:\R\to [-1,1]$. A possible choice is
  $\sgn_\e(\eta) := \eta/\e$ for $\eta\in [-\e,\e]$,
  $\sgn_\e(\eta):=-1$ for $\eta<-\e$ and $\sgn_\e(\eta):=1$ for
  $\eta>\e$. We also introduce the primitive
  $H_\e(\eta)=\si_0^\eta \sgn_\e(\vr)d\vr$. We demand that, as
  $\e\to 0$, there holds $\sgn_\e(\eta)\to \sgn(\eta)$,
  $H_\e(\eta)\to |\eta|$, and $\eta\, \sgn_\e(\eta)\to |\eta|$.

  We differentiate relation \eqref {eq:HysDyn-TTW}, in the sense of
  distributions, with respect to $x_j$ for $x_j = y$ and for
  $x_j = z$.  The regularity assumption \eqref {eq:ass-s-regularity}
  on $s_H$ allows to write
  $$
  c\t \p_z \p_{x_j} s_H + {p_c}'(s_H) \p_{x_j} s_H\ \sgn(\p_z s_H)
  = \p_{x_j} p_H\  \sgn(\p_z s_H)\, .
  $$
  Multiplying both sides with $\sgn_\e(\p_{x_j} s_H)$ yields
  \begin{equation}
    \begin{split}
      &c\tau \p_z H_\e(\p_{x_j} s_H)+ {p_c}'(s_H)\,
      \sgn_\e(\p_{x_j} s_H) \p_{x_j} s_H\, \sgn(\p_z s_H) \\
      &\qquad = \sgn(\p_z s_H)\, \sgn_\e(\p_{x_j}s_H) \p_{x_j} p_H.
    \end{split}\label{eq:sign-eps-testing}
  \end{equation}
  Passing to the limit $\e\to 0$, we obtain for $x_j = z$ the relation
  \begin{equation}\label{eq:s-Lip-24}
    c\t\p_z |\del_z s_H| + {p_c}'(s_H)|\del_z s_H|
    \leq |\del_z p_H| \leq C_P\,,
  \end{equation}
  where we used \eqref {eq:FingrDP} in the last inequality.  We
  exploit that, for $z=0$, there holds
  $\p_z s_H= \tfrac{1}{c\t}[p_0-p_c(s_0)]_+$, and hence also
  $|\p_z s_H| \le \tfrac{1}{c\t}\| p_0-p_c(s_0)
  \|_{L^\infty(\Sigma)}$.  Inequality \eqref {eq:s-Lip-24} implies
  that $|\del_z s_H|$ cannot exceed the value $C_s$ of \eqref
  {eq:s-Lip-claim}.

  We now study $x_j = y$ in \eqref {eq:sign-eps-testing}. In the limit
  $\e\to 0$, exploiting $\del_z s_H \ge 0$, we find
  \begin{equation}\label{eq:s-Lip-25}
    c\t\p_z |\del_y s_H| + {p_c}'(s_H)|\del_y s_H|\, \sgn(\p_z s_H)
    \leq |\del_y p_H|\,  \sgn(\p_z s_H)\,.
  \end{equation}
  With the uniform bound $|\nabla p_H| \le C_P$ of \eqref {eq:FingrDP}
  we can write
  \begin{equation}\label{eq:s-Lip-82}
    c\t\p_z |\del_y s_H| \le (C_P - \rho |\del_y s_H|)\, \sgn(\p_z s_H)\,.
  \end{equation}
  For $z=0$, there holds $\p_y s_H = \del_y s_0$.  Inequality \eqref
  {eq:s-Lip-82} implies that $|\del_y s_H|$ cannot exceed the value
  $C_s$ of \eqref {eq:s-Lip-claim}.
\end{proof}

%%%%%%%%%%%%%%%%%%%%%%%%%%%%%%%%%%%%%%%%%%
\bibliographystyle{plain}

\def\cprime{$'$}

\end{document}